\newcommand{\Z}{\mathbb{Z}}
\newcommand{\Q}{\mathbb{Q}}
\theoremstyle{plain}
\theoremstyle{definition}
\newtheorem*{theorem*}{Theorem}
\newtheorem{thm}{Theorem}[section]
\newtheorem{lem}[thm]{Lemma}
\theoremstyle{definition}
\newtheorem{example}[thm]{\scshape{Example}}
\newtheorem{rem}[thm]{\scshape{Remark}}
\newcommand{\doesnotdivide}{\not\hspace{2.5pt}\mid}
\def\cleardoublepage{\clearpage\if@twoside \ifodd\c@page\else
	\hbox{}
	\thispagestyle{empty}
	\newpage
	\if@twocolumn\hbox{}\newpage\fi\fi\fi}
\title{Powerful 3-Engel groups}
\author[I. de las Heras]{Iker de las Heras}
\address{Iker de las Heras: Mathematisches Institut der Heinrich-Heine-Universität Düsseldorf
Universitätsstr. 1, D-40225 Düsseldorf, Germany}
\email{iker.delasheras@ehu.eus}
\author[M. Noce]{Marialaura Noce}
\address{Marialaura Noce: Department of Mathematics, University of Salerno, via Giovanni Paolo II 132, 84084 Fisciano (SA), Italy}
\email{mnoce@unisa.it}
\author[G. Traustason]{Gunnar Traustason}
\address{Department of Mathematical Sciences, University of Bath, Claverton Down, Bath BA2 7AY, United Kingdom}
\email{gt223@bath.ac.uk}
\keywords{Engel groups, powerful groups, metabelian groups, nilpotency class}
\subjclass[2010]{20F45, 20D15, 20F19}
\begin{document}

\maketitle
\begin{abstract}
In this paper we study powerful 3-Engel groups. In particular, we find sharp upper bounds for the nilpotency class of powerful $3$-Engel groups and the subclass of powerful metabelian $3$-Engel groups.
\end{abstract}

\section{Introduction}
\mbox{}\\
Let $n$ be a positive integer. A group $G$ is said to be an $n$-Engel group if for all $x,y\in G$ we have $[[y,x],\overset{n}{\ldots}\,,x]=1.$
It is a long-standing question whether a finitely generated $n$-Engel group is nilpotent.
By Wilson \cite{Wi91}, a finite $r$-generator $n$-Engel group is nilpotent of $(n,r)$-bounded class and hence any finitely generated residually finite $n$-Engel group is nilpotent.
Whereas the class of a finite $n$-Engel groups depends on the number of generators when $n\geq 3$, the situation is different if we furthermore assume that $G$ is a powerful $p$-group.

Recall that a finite $p$-group $G$ is powerful if $[G,G]\leq G^{p}$, when $p$ is odd, and $[G,G]\leq G^{4}$, when $p=2$. The reason why the nilpotency class of a powerful $n$-Engel group is only bounded in $n$ comes from a well known theorem of Burns and Medvedev \cite{BuMe98}, according to which there exist positive integers $m$ and $k$ only depending on $n$ such that any finite $n$-Engel group satisfies $\gamma_{m}(G)^{k}=1$.
 
Let $G$ be a powerful $m$-Engel $p$-group. It is a well known property of powerful $p$-groups that $\gamma_{m+l}(G)\leq \gamma_{m}(G)^{p^{l}}$. Thus if we take $m=m(n)$ and $k=k(n)$ as in the theorem of Burns and Medvedev, then picking $l$ such that $p^{l}\geq k$ for all $p$ we see that every powerful $n$-Engel $p$-group is nilpotent of class at most $c=m+l-1$.
Notice that the class is bounded only by $n$ and not by $r$ or $p$.

In \cite{PrTr08} and \cite{Tr08} it is shown that the best upper bound for the nilpotency class of powerful $2$-Engel groups is $3$. In this case we get the same upper bound as holds for $2$-Engel groups in general. It is though interesting to note that it is shown that every powerful 
$3$-generator $2$-Engel group is nilpotent of class at most $2$ and thus satisfies the law $[x,y,z]=1$. This may seem counter intuitive but one should remember that the class of powerful $p$-groups is not closed under taking subgroups. \\ \\
In this paper we will turn our focus on powerful $3$-Engel groups. %
The latter and, more precisely, the behaviour of 3-Engel elements in groups has been developed and studied recently \cites{Trac,Trau1,NoTrTr, HadNoTr}.

We now describe our main results. We start with a general result on powerful $n$-Engel groups. Note that every powerful group $G$ of rank $2$ has a cyclic derived subgroup. In such a case, if $G$ is also $n$-Engel, we show that the nilpotency class of $G$ can be at most $n$.

\begin{thm}
 \label{thm: derived cyclic}
 Let $n\ge 1$ and let $G$ be an $n$-Engel group with cyclic derived subgroup.
 Then the nilpotency class of $G$ is at most $n$.
 Moreover, this bound is best possible.
\end{thm}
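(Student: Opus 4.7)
Since $G'$ is cyclic and hence abelian, $G$ is metabelian. My plan is to fix a generator $c$ of $G'$ and, for every $x\in G$, define the integer $f(x)$ (modulo $|c|$ when $c$ has finite order) by $c^x=c^{f(x)}$. A straightforward induction on $k$, carried out inside the abelian group $G'$ using the identity $[a,x]=a^{f(x)-1}$ for $a\in G'$, will give the formula
\[
 [a,x_1,\ldots,x_k] \;=\; a^{(f(x_1)-1)(f(x_2)-1)\cdots(f(x_k)-1)} \qquad(a\in G',\ x_i\in G).
\]
Since every weight-$(n+1)$ commutator can be rewritten as $[a,x_3,\ldots,x_{n+1}]$ with $a=[x_1,x_2]\in G'$, the task reduces to showing that $(f(x_1)-1)\cdots(f(x_n)-1)$ annihilates $c$.

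Applying the $n$-Engel law to the pair $(c,x)$ gives $c^{(f(x)-1)^n}=1$ for every $x\in G$. If $c$ has infinite order this forces $f(x)=1$ for all $x$, placing $G'$ inside $Z(G)$ and yielding class at most $2\le n$. Otherwise, set $m=|c|$; for each prime $p\mid m$ the divisibility $p^{v_p(m)}\mid(f(x)-1)^n$ gives $v_p(f(x)-1)\ge\lceil v_p(m)/n\rceil$, and hence
\[
 v_p\!\Bigl(\prod_{i=1}^n(f(x_i)-1)\Bigr)\;\ge\;n\Bigl\lceil\tfrac{v_p(m)}{n}\Bigr\rceil\;\ge\;v_p(m).
\]
Since this holds for every prime divisor of $m$, we conclude $m\mid\prod_i(f(x_i)-1)$, which proves $\gamma_{n+1}(G)=1$.

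For the sharpness, I would exhibit, for each prime $p$, the split extension
\[
 G\;=\;\langle b\rangle\rtimes\langle a\rangle, \qquad b^{p^n}=1,\quad b^a=b^{1+p},
\]
with $a$ of infinite order. Here $G'=\langle b^p\rangle$ is cyclic of order $p^{n-1}$, and a direct induction gives the iterated commutator $[b,\underbrace{a,\ldots,a}_{k}]=b^{p^k}$, which is trivial precisely when $k\ge n$. Because $(1+p)^i-1$ is divisible by $p$ for every integer $i$, one has $p\mid f(x)-1$ for all $x\in G$, so the valuation estimate of the previous paragraph shows that $G$ is $n$-Engel; the nontriviality of $[b,\underbrace{a,\ldots,a}_{n-1}]=b^{p^{n-1}}$ then forces the class to be exactly $n$.

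The only real obstacle is the elementary $p$-adic inequality $n\lceil v/n\rceil\ge v$ that makes the two bounds line up; the remaining steps are standard metabelian manipulations together with a direct verification in a small semidirect product.
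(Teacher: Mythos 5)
There is a genuine gap in your upper-bound argument, and it is an off-by-one on which the whole theorem hinges. A left-normed commutator of weight $n+1$ is $[a,x_3,\ldots,x_{n+1}]$ with $a=[x_1,x_2]$, so only $n-1$ conjugation factors act on $a$: what you must show is that the product $(f(x_3)-1)\cdots(f(x_{n+1})-1)$ of $n-1$ terms annihilates $[x_1,x_2]$, not that a product of $n$ terms annihilates $c$. With the correct count your valuation estimate gives only $v_p\bigl(\prod_{i=3}^{n+1}(f(x_i)-1)\bigr)\ge (n-1)\left\lceil v_p(m)/n\right\rceil$, which can be strictly smaller than $v_p(m)$ (already for $n=2$ and $v_p(m)=2$ one gets $1<2$); so the argument as written only yields class at most $n+1$. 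The information you are discarding is exactly what produces the sharp bound: $a=[x_1,x_2]$ is not an arbitrary power of $c$, and the Engel law applied to the pair $(x_2,x_1)$ gives $[x_1,x_2]^{(f(x_1)-1)^{n-1}}=1$, tying the depth of $[x_1,x_2]$ inside $\langle c\rangle$ to the valuation of $f(x_1)-1$. This is precisely the paper's route: writing $[x_i,x_j]=y^{p^{\gamma(i,j)}\delta(i,j)}$ and $[y,x_k]=y^{p^{\alpha(k)}\beta(k)}$, the relation $[x_{i_2},x_{i_1},\overset{n}{\ldots},x_{i_1}]=1$ forces $y^{p^{\gamma(i_1,i_2)+(n-1)\alpha(i_1)}}=1$, which kills $[x_{i_1},\ldots,x_{i_{n+1}}]$ whenever the entry with minimal $\alpha$ occupies one of the first two positions; the remaining case is reduced to this one by permuting the entries in positions $\ge 3$ and then using the Hall--Witt identity to move the minimal entry to the front. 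You would need to add both ingredients (the Engel relation on the leading pair, and the Hall--Witt reduction) to close the gap.

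Your sharpness example is essentially the paper's Example \ref{ex: cyclic} with $a$ of infinite order, and it does work: $\gamma_{n+1}(G)$ is generated by the elements $[b^p,x_1,\ldots,x_{n-1}]$, each of $p$-valuation at least $1+(n-1)=n$ in $\langle b\rangle$, so $\gamma_{n+1}(G)=1$ while $[b,a,\overset{n-1}{\ldots},a]=b^{p^{n-1}}\neq 1$; the group is then $n$-Engel simply because it is nilpotent of class $n$ (your appeal to ``the valuation estimate of the previous paragraph'' for the Engel property is unnecessary and, given the flaw above, best replaced by this direct observation).
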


\begin{rem}
 If $G$ is a $2$-generator non-powerful $3$-Engel $p$-group, then its derived subgroup need not be cyclic.
 In that case the nilpotency class of $G$ is at most $3$ if $p$ is odd and $4$ if $p=2$ (this was shown in \cite{He61} and it is summed up in Theorem 2.1 of \cite{guptanewman}).
 These bounds are also best possible.
\end{rem}
Our main result is determining the best upper bound for the nilpotency class of powerful $3$-Engel groups.
From \cite{guptanewman} we know that every $3$-Engel group satisfies $\gamma_{5}(G)^{20}=1$, and thus $\gamma_5(G)=1$ when $G$ is a $p$-group with $p\neq 2,5$, $\gamma_{5}(G)^{5}=1$ when $p=5$ and $\gamma_{5}(G)^{4}=1$ when $p=2$.
From the general analysis above we thus know that when $G$ is furthermore powerful then $\gamma_{6}(G)=1$.
We will show that $5$ is the best upper bound for the nilpotency class.
The following theorem gives a more detailed analysis. 
%

\begin{thm}
 \label{thm: non-metabelian}
 Let $G$ be a powerful $3$-Engel $p$-group. If $p\neq 2,5$, then $G$ is nilpotent of class at most $4$.
 Furthermore, if $G$ has rank $r\ge 3$, then: 
 \begin{enumerate}
     \item\label{it: p neq2,5,r=3}If $r=3$, then the nilpotency class of $G$ is at most $3$;
     \item If $r\ge 4$, then the nilpotency class of $G$ is at most $4$.
 \end{enumerate}
 If $p=2$ or $5$, then $G$ is nilpotent of class at most $5$.
 Furthermore:
 \begin{enumerate}
    \item[(iii)]\label{it: p=5,r=3} If $r=3$ and $p=5$, then the nilpotency class of $G$ is at most $3$;
    \item[(iv)] \label{it: p=2,r=3}If $r=3$ and $p=2$,  then the nilpotency class of $G$ is at most $4$;
    \item[(v)] If $4\le r\le 5$, then the nilpotency class of $G$ is at most $4$;
  \item[(vi)] If $r\ge 14$, then the nilpotency class of $G$ is at most $5$.
 \end{enumerate}
 Moreover, these upper bounds are best possible.
\end{thm}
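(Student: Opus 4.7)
The plan is to establish the absolute class bounds separately in each residue class of $p$, then refine them according to the rank, and finally construct explicit examples realising each bound. For $p\neq 2,5$, the Gupta--Newman relation $\gamma_{5}(G)^{20}=1$ combined with $\gcd(p,20)=1$ immediately gives $\gamma_{5}(G)=1$, hence class at most~$4$. For $p=5$ one has $\gamma_{5}(G)^{5}=1$, so the standard powerful inequality $\gamma_{i+1}(G)\leq\gamma_{i}(G)^{p}$ yields $\gamma_{6}(G)\leq\gamma_{5}(G)^{5}=1$, hence class at most~$5$. The genuinely delicate case is $p=2$: one only knows $\gamma_{5}(G)^{4}=1$, which via crude iteration gives merely $\gamma_{7}(G)=1$. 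I would close this gap by showing $\gamma_{6}(G)=1$ directly, using that $\gamma_{6}(G)$ is central (since $\gamma_{7}(G)=1$) and has exponent at most~$2$ (since $\gamma_{6}(G)\leq\gamma_{5}(G)^{2}$ while $\gamma_{5}(G)^{4}=1$), then exploiting a multilinear expansion of the $3$-Engel identity in Heineken's style to collapse an arbitrary generator of $\gamma_{6}(G)$ into $\gamma_{6}(G)^{2}=1$.

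For the rank-sensitive refinements the idea is to bound the class by controlling which basic commutators can survive simultaneously the $3$-Engel identity and the powerful relation. When $r=2$, the derived subgroup is cyclic and \cref{thm: derived cyclic} with $n=3$ already gives class at most~$3$. For $r=3$ I would enumerate the basic commutators of weight $\geq 4$ on three generators compatible with the $3$-Engel identity; the short list that remains, combined with $[G,G]\leq G^{p}$, should force $\gamma_{4}(G)=1$ for odd~$p$, while for $p=2$ the weaker inclusion $[G,G]\leq G^{4}$ leaves an obstruction in $\gamma_{4}(G)$ but still kills $\gamma_{5}(G)$. The rank thresholds $r\leq 5$ and $r\geq 14$ in~(v)--(vi) should emerge from a refined version of this enumeration at weight~$5$: the relevant layer of the free $3$-Engel Lie ring is spanned by a small family of basic commutators, and only once at least $14$ generators are available can one of these commutators survive the powerful constraint; for $4\leq r\leq 5$ a direct check should show that none do.

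For the sharpness claims I would construct explicit powerful $3$-Engel $p$-groups of the prescribed rank, presenting them as quotients of the free $3$-Engel $p$-group of that rank by the minimal set of extra relations enforcing powerfulness while preserving a chosen commutator of the target weight. The $r=2$ and the $r=3$, $p\in\{5,\text{ odd}\neq 5\}$ examples of class~$3$, and the class-$4$ examples for $r\geq 4$ (and their $r=3$, $p=2$ analogue), can be built as small $p$-quotients; the class-$5$ examples for $p\in\{2,5\}$ with $r\geq 14$ will require the full generator count, so that the chosen weight-$5$ commutator is not wiped out by the extra relations.

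The hardest part is the $p=2$ analysis, which appears twice: sharpening the absolute bound from~$6$ down to~$5$ goes beyond the Burns--Medvedev machinery and needs a direct attack on $\gamma_{6}(G)$ using its $2$-torsion together with Heineken's $3$-Engel identities, and pinning down the rank threshold $r=14$ for class~$5$ rests on a combinatorial count of weight-$5$ basic commutators surviving both Gupta--Newman reductions and powerfulness, which is not predicted by softer arguments. Once these two technical points are in place, the remaining cases should follow from comparatively routine commutator calculus combined with the structural facts about powerful $p$-groups recalled in the introduction.
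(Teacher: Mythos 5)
Your plan reproduces the paper's architecture at the top level (absolute bounds by residue class of $p$, then rank refinements, then examples), and the easy parts are right: $\gamma_5(G)^{20}=1$ kills $\gamma_5(G)$ for $p\neq 2,5$, and for $p=5$ powerfulness gives $\gamma_6(G)\le\gamma_5(G)^5=1$. But every genuinely hard step is asserted rather than argued, and in two places the proposed route would not work as described. First, for the absolute bound at $p=2$ you propose showing $\gamma_6(G)$ has exponent $2$ via $\gamma_6(G)\le\gamma_5(G)^2$ and then collapsing it by an unspecified ``Heineken-style'' identity; you never exhibit such an identity, and none is needed: since $\gamma_5(G)$ is powerfully embedded in a powerful $2$-group, $\gamma_6(G)=[\gamma_5(G),G]\le\gamma_5(G)^4=1$ directly from $\gamma_5(G)^{20}=1$. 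Second, for the rank-$3$ cases (i), (iii), (iv), ``enumerate the basic commutators compatible with the $3$-Engel identity, combined with $[G,G]\le G^p$'' is not a proof. The actual argument writes $[a,b]=a^{rp^i}b^{sp^j}c^{tp^k}$, etc., and runs a lengthy case analysis on whether $p$ divides each of $r,s,t,u,v,w,\alpha,\beta,\gamma$, using the specific Gupta--Newman congruences $[c,a,a,b]^4[c,b,a,a]^6\equiv 1$ and $[c,b,a,a]^2\equiv[c,a,b,a]^4$ modulo $\gamma_5$; the prime $p=3$ needs separate treatment precisely because the first of these degenerates mod $3$. Nothing in your sketch produces these case splits or explains why the surviving commutators die.

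For part (v) the mechanism is also different from what you describe: it is not an enumeration of weight-$5$ basic commutators, but the observation that rank $\le 5$ forces a nontrivial $\mathbb{F}_p$-linear dependence modulo $Z_2(G)$ among the six weight-$3$ commutators $[c,a,a],[c,b,b],[c,a,b],[c,b,a],[a,b,b],[b,a,a]$, after which each possible nonzero coefficient is exploited or excluded using the weight-$5$ Gupta--Newman congruences and the Hall--Witt identity; you give no substitute for this. For (vi) you have the logic backwards: the upper bound for $r\ge 14$ is just the global $\gamma_6(G)=1$, valid for all ranks; the number $14$ is the rank of the explicit example with $\gamma_5(G)\neq 1$, and for $6\le r\le 13$ the sharp bound is explicitly left open, so your plan to show that class $5$ first becomes possible at $r=14$ would be proving something the paper does not claim. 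Finally, the class-$5$ sharpness example for $p\in\{2,5\}$ is not a routine quotient of a free $3$-Engel group: it is built by first constructing a powerful $3$-Engel Lie ring of class $5$ over $\mathbb{Z}$ and then transporting the construction to groups via a Mal'cev-type correspondence, with explicit verification that the Engel relations in the relevant multi-homogeneous components do not kill the chosen weight-$5$ commutator. As it stands the proposal is an outline of intent, with genuine gaps at each of the points where the paper does real work.
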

\begin{rem} For $6\leq r\le 13$, we only know that the best upper bound for the class is $4$ or $5$.
\end{rem}
Finally, when we furthermore know that $G$ is metabelian then we can get the following result.

\begin{thm}
 \label{thm: metabelian}
 Let $G$ be a powerful metabelian $3$-Engel $p$-group of rank $r\ge 3$.
 If $p$ is odd, then the nilpotency class of $G$ is at most $3$.
 If $p=2$, then the nilpotency class of $G$ is at most $4$.
 Moreover, these upper bounds are best possible.
\end{thm}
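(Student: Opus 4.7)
The plan is to establish the upper bounds first, and then to exhibit explicit examples showing them to be sharp. For the upper bounds, the starting point is Theorem~\ref{thm: non-metabelian}, which tells us that $G$ has nilpotency class at most $5$; the metabelian hypothesis will be used to improve this by one.

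The key tool is the combination of two identities valid in any metabelian group. First, the symmetry
\[
[u, x, y] = [u, y, x] \qquad \text{for all } u \in G' \text{ and } x, y \in G,
\]
which follows from $u^{xy} = u^{yx}$: indeed $xy = yx \cdot [x, y]$, and since $G'$ is abelian, $[x, y] \in G'$ centralizes $u \in G'$. Second, writing $L_z \colon G' \to G'$ for the endomorphism $v \mapsto [v, z]$, the metabelian structure gives $L_{xy} = L_x + L_y + L_y L_x$ on $G'$, while the 3-Engel law gives $L_{xy}^3(u) = 0$. Expanding $L_{xy}^3(u) = 0$ for $u \in G'$ and discarding the contributions containing an $L_y L_x$ factor (each such factor raises the weight by $2$ rather than $1$, and the resulting term therefore lies in $\gamma_6(G) = 1$), the analysis reduces to the multihomogeneous components of $(L_x + L_y)^3(u) = 0$. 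Combining these with the symmetry above forces $\gamma_4(G)$ to be generated, as an abelian subgroup of $G'$, by elements of order dividing $2$, so that $\gamma_4(G)$ itself has exponent dividing $2$.

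For $p$ odd, $\gamma_4(G)$ is then a $p$-group of exponent dividing $2$, hence trivial, and $G$ has nilpotency class at most $3$. For $p = 2$, the standard property $\gamma_{i+1}(G) \leq \gamma_i(G)^4$ for powerful $2$-groups yields $\gamma_5(G) \leq \gamma_4(G)^4 = 1$, and so $G$ has nilpotency class at most $4$.

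For the sharpness, we construct explicit examples. For $p$ odd, a suitable finite quotient of the free nilpotent class-$3$ group on three generators over $\mathbb{Z}/p^k\mathbb{Z}$ (with $k$ large enough that the quotient is powerful) is metabelian and $3$-Engel (both automatic from class~$\le 3$), has rank $3$, and has class exactly $3$. For $p = 2$, a more delicate construction starting from a relatively free object in the variety of metabelian $3$-Engel $2$-groups and passing to a carefully chosen finite quotient produces a powerful metabelian $3$-Engel $2$-group of rank at least $3$ and class exactly $4$. The main obstacle in the argument is the bookkeeping in the linearization step: one must carefully track commutator expressions in $\gamma_k(G)$ for the various $k$, and verify that the identities extracted after applying the metabelian symmetry genuinely force the exponent-$2$ conclusion on $\gamma_4(G)$. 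Constructing the sharp example for $p = 2$ is a secondary challenge, since the constraints of being metabelian, $3$-Engel, and powerful tightly restrict the possible class-$4$ constructions of rank at least $3$.
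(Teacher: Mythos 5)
Your reduction for $p$ odd is fine in outcome (it is exactly Gupta--Newman's Theorem~\ref{thm: gamma4^16=1}, which gives $\gamma_4(G)^{16}=\gamma_5(G)^2=1$ and hence $\gamma_4(G)=1$ for odd $p$, with no powerfulness needed), but the pivotal claim in your $p=2$ argument --- that the linearized $3$-Engel identities together with the metabelian symmetry ``force $\gamma_4(G)$ to have exponent dividing $2$'' --- is asserted rather than derived, and it is not true. Carrying out the linearization you sketch: for $u$ a \emph{generator} (which is what you need to control $\gamma_4$ rather than $\gamma_5$; your operator calculus with $u\in G'$ only produces relations of weight $\ge 5$), the metabelian symmetry lets you swap only the last two entries of $[u,x,y,z]$, and feeding this into the relations of Lemma~\ref{lem: GN mod gamma5} gives $[c,b,a,a]^2\equiv[c,a,a,b]^4$ and $[c,a,a,b]^8\equiv 1$ modulo $\gamma_5(G)$ --- i.e.\ exponent $8$ mod $\gamma_5$, hence $16$ outright, which is precisely where Gupta--Newman's bound comes from. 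Nothing in these relations forces exponent $2$, or even exponent $4$. And exponent $4$ is the threshold your argument needs: with only $\gamma_4(G)^{16}=\gamma_5(G)^2=1$ in hand, powerfulness gives $\gamma_5(G)\le\gamma_4(G)^4$ and $\gamma_6(G)\le\gamma_5(G)^4=1$, i.e.\ class at most $5$, not $4$. So as written the $p=2$ upper bound does not follow.

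The paper closes exactly this gap by using the powerful hypothesis structurally rather than through an exponent bound on $\gamma_4(G)$: one chooses generators $x_1,\dots,x_n$ with $G'=\langle x_1^{4\alpha_1},\dots,x_k^{4\alpha_k}\rangle$, reduces to showing $[x_i^{4\alpha_i},a,b,c]=1$, pulls the power out via Hall--Petrescu, and then uses the ``rotation'' identity $[a,b,c,d]^2[a,c,b,d]^2[a,d,c,b]^2\in\gamma_5(G)$ (Lemma~\ref{lem:metabsquarecomm}, itself a polarization of Lemma~\ref{lem: GN mod gamma5}(i)) to move $x_i^{4\alpha_i}$ into the third or fourth slot, where $x_i^{4\alpha_i}\in G'$ and metabelianity kills the commutator. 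If you want to rescue your approach you would need some analogue of this manoeuvre; an exponent argument alone cannot work. Separately, your sharpness discussion for $p=2$ only asserts that ``a carefully chosen finite quotient'' exists; an explicit witness is genuinely needed here (the paper supplies one in Example~\ref{ex: p=2,r=3,powerful}), since the whole point is that class $4$ is actually attained under all three constraints simultaneously.
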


\begin{rem}
\label{rem: p odd or r=3}
 In Theorem \ref{thm: metabelian}, if $p$ is odd or if $r=3$, then the assumption that $G$ is powerful is not necessary, as shown in Theorem \ref{thm: gamma4^16=1} and Lemma \ref{lem: metabelian 3-generator}. 
\end{rem}

\noindent\textit{Notation and organisation.}
Our notation is mostly standard.
For a group $G$ and elements $x,y\in G$, the commutator of $x$ and $y$ is the element $[x,y]=x^{-1}y^{-1}xy$.
We  use left-normed notation for commutators of length $\ge 3$ (for example, $[x,y,z] = [[x,y],z]$).
A commutator of length $n\in\mathbb{N}$ in a subset $X\subseteq G$ is a commutator of the form $[x_1,\ldots,x_n]$, where $x_1,\ldots,x_n\in X$.
We denote the derive subgroup of $G$ by $G'$.

The paper is organized as follows.
In Section \ref{sec: preliminaries} we will recall and develop some useful tools about commutators in 3-Engel groups.
In Section \ref{sec: metabelian}, we prove Theorem \ref{thm: metabelian} which gives a bound for the nilpotency class of a powerful metabelian 3-Engel group. In Section \ref{sec: powerful rank 3} we establish Theorem \ref{thm: non-metabelian}(i), (iii) and (iv) for powerful $3$-Engel groups of rank $3$.
Finally, in Section \ref{sec: powerful rank r}, we complete the proof of Theorem \ref{thm: non-metabelian} and, as a consequence, the description of the nilpotency class of powerful 3-Engel groups.

\section{Preliminaries}
\label{sec: preliminaries}

Throughout the paper, the following identities due to Gupta and Newman \cite{guptanewman} will have a prominent role.

\begin{lem}[Gupta and Newman \mbox{\cite[Lemma 2.2]{guptanewman}}]
 \label{lem: GN mod gamma5}
 Let $G$ be a $3$-Engel group and let $a,b,c\in G$.
 Then, modulo $\gamma_5(G)$, we have the following relations: 
 \begin{enumerate}
     \item 
 $[c,a,a,b][c,a,b,a][c,b,a,a][c,a,b,b][c,b,a,b][c,b,b,a] \equiv 1$;
 \item $[c,a,a,b]^2[c,a,b,a]^2[c,b,a,a]^2\equiv 1$;
 \item $[b,a,a,c]\equiv [c,b, a,a]^{-3} [c,a,a,b]^{-3}$;
 \item $[c,a,a,b]^4[c,b,a,a]^6\equiv 1$;
 \item $[c,b,a,a]^2\equiv [c,a,b,a]^4$.
 \end{enumerate}
\end{lem}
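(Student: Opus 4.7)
The plan is to work modulo $\gamma_5(G)$, where every weight-$4$ commutator is central and hence multilinear in each entry: for all $u, u', v, w, z \in G$, I have $[uu', v, w, z] \equiv [u, v, w, z] \cdot [u', v, w, z]$ (and analogously in the remaining positions), as well as $[u^{-1}, v, w, z] \equiv [u, v, w, z]^{-1}$. I shall also use the Jacobi--Leibniz identity $[x, [y, z]] \equiv [x, y, z] \cdot [x, z, y]^{-1}$, which is valid modulo $\gamma_4(G)$. All equalities below are understood modulo $\gamma_5(G)$.

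For identity (i), I would expand the 3-Engel relation $[c, ab, ab, ab] = 1$ by multilinearity; the two uniform contributions $[c, a, a, a]$ and $[c, b, b, b]$ vanish by the 3-Engel law itself, leaving exactly the six factors of (i). For (ii), the cleanest route is to polarize the 3-Engel identity: substituting $x = x_1 x_2 x_3$ in $[y, x, x, x] = 1$ and cancelling the non-multilinear contributions (by applying the 3-Engel law to each two-element subsum $x_i x_j$) gives
\[
\prod_{\sigma \in S_3} [y, x_{\sigma(1)}, x_{\sigma(2)}, x_{\sigma(3)}] \equiv 1,
\]
and specialising $(y, x_1, x_2, x_3) = (c, a, a, b)$ yields (ii) at once, since each distinct commutator appears twice in the $S_3$-sum.

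For (iii), iterated application of the Jacobi--Leibniz identity gives
\[
[c, [b, a, a]] \equiv [c, b, a, a] \cdot [c, a, b, a]^{-2} \cdot [c, a, a, b];
\]
antisymmetry then yields $[b, a, a, c] \equiv [c, [b, a, a]]^{-1}$, and combining this with the substitution $[c, a, b, a]^{2} \equiv [c, a, a, b]^{-2} [c, b, a, a]^{-2}$ from (ii) produces (iii). For (v), I apply the polarized 3-Engel identity a second time with a different base element, taking $(y, x_1, x_2, x_3) = (a, a, b, c)$. Two of the six permutations contain a leading $[a, a]$ and hence vanish, leaving
\[
[a, b, a, c] \cdot [a, b, c, a] \cdot [a, c, a, b] \cdot [a, c, b, a] \equiv 1.
\]
Converting each factor into $c$-first form via Jacobi (reusing the expansion of $[b, a, a, c]$ derived in the proof of (iii)), the $[c, a, a, b]$-contributions cancel and what remains is exactly $[c, b, a, a]^2 \equiv [c, a, b, a]^4$, i.e.\ (v). Finally, (iv) follows by squaring (ii) to obtain $[c, a, a, b]^4 [c, a, b, a]^4 [c, b, a, a]^4 \equiv 1$ and then substituting $[c, a, b, a]^4 \equiv [c, b, a, a]^2$ via (v).

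The main obstacle is keeping track of $2$-torsion throughout. Polarization of a cubic identity unavoidably produces a factor of $2$ in the orbit sum, and (ii), (iv), (v) all have only even exponents, so one cannot simply divide by $2$ in $\gamma_4(G)/\gamma_5(G)$. The base-$a$ polarization succeeds for (v) precisely because the $[c, a, a, b]$-contributions happen to cancel in the Jacobi conversion; with other natural choices of base (for example $y = b$), one obtains only the square of (v), namely $[c, b, a, a]^4 \equiv [c, a, b, a]^8$, which is strictly weaker.
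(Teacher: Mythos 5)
Your proposal is correct. Note, however, that the paper does not prove this lemma at all: it is quoted verbatim from Gupta and Newman \cite{guptanewman}*{Lemma 2.2}, so there is no in-paper argument to compare against. What you have written is a self-contained reconstruction along the classical lines: full multilinearity of weight-$4$ commutators modulo $\gamma_5(G)$, linearization of the $3$-Engel law (giving (i) from $[c,ab,ab,ab]=1$ and (ii) from the $S_3$-sum specialised at $(c,a,a,b)$), and Jacobi/antisymmetry in the associated graded Lie ring to move entries to the front. I checked the two computational pivots: writing $A=[c,a,a,b]$, $B=[c,a,b,a]$, $C=[c,b,a,a]$ additively, one indeed gets $[c,[b,a,a]]\equiv A-2B+C$, whence (iii) follows from $2B\equiv -2A-2C$; and the base-$a$ specialisation of the linearized identity gives $(A+C-2B)+(C-B)+(-A)+(-B)=2C-4B\equiv 0$, which is exactly (v), after which (iv) is immediate from squaring (ii). The one place where you are right to flag care is the $2$-torsion issue: (ii), (iv), (v) cannot be divided by $2$, and your choice of base element $a$ in the polarization for (v) is precisely what avoids picking up a spurious factor of $2$. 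The argument is sound as it stands.
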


\begin{lem}[Gupta and Newman \mbox{\cite[Lemma 2.3]{guptanewman}}]
 \label{lem: GN mod gamma6}
 Let $G$ be a 3-Engel group and let $a,b,c\in G$.
 Let also $w_1=[c,a,a,b,b]$, $w_2=[c,a,b,a,b]$, $w_3=[c,a,b,b,a]$, $w_4=[c,b,a,a,b]$,
 $w_5=[c,b,a,b,a]$ and $w_6=[c,b,b,a,a]$.
 Then, modulo $\gamma_6(G)$, we have the following relations:
 \begin{enumerate}
     \item $[c,a,b,a,a]\equiv 1$;
     \item $[c,a,a,b,a]\equiv 1$;
     \item $w_3\equiv w_4\equiv w_1^{-1}w_2^{-1}$, $w_2\equiv w_5$, $w_1\equiv w_6$;
     \item $[b,a,a,b,c]\equiv 1$;
     \item $w_2\equiv w_1^3$, $w_3\equiv w_1^{-4}$;
     \item $w_1^{10}\equiv 1$.
 \end{enumerate}
\end{lem}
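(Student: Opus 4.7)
The plan is to pass to the graded Lie ring $L=\bigoplus_{i\ge 1}\gamma_i(G)/\gamma_{i+1}(G)$, in which each $L_i$ is abelian, commutators become Lie brackets, and the $3$-Engel law of $G$ translates to the Lie identity $[g,x,x,x]=0$ for all $g\in L$ and $x\in L_1$.  Every relation in the statement lives inside $L_5=\gamma_5(G)/\gamma_6(G)$, so I will write $w_1,\dots,w_6$ additively from now on.  Applying the Engel identity to $[u,v]\in L_2$ gives $[u,v,x,x,x]=0$, and its standard linearisation in $x$ (replace $x$ by $x+y$ and extract the bilinear-in-$(x,y)$ part) produces the master relation
\[
[u,v,y,x,x]+[u,v,x,y,x]+[u,v,x,x,y]=0 \qquad(\star)
\]
valid in $L_5$ for all $u,v,x,y\in L_1$.

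Specialising $(\star)$ with $(u,v,y,x)=(c,a,a,b)$ gives $w_1+w_2+w_3=0$, which is exactly the relation $w_3\equiv w_1^{-1}w_2^{-1}$ of part (iii); the substitution $(c,b,b,a)$ gives $w_4+w_5+w_6=0$.  With $(c,a,b,a)$ the third summand of $(\star)$ vanishes by the $3$-Engel law (since $[c,a,a,a]=0$) and we obtain $[c,a,b,a,a]+[c,a,a,b,a]=0$; analogous substitutions yield every pairwise cancellation needed for (i), (ii) and (iv).

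To upgrade these pairwise cancellations into the individual vanishing statements of (i), (ii), (iv) and into the reflection symmetries $w_1\equiv w_6$, $w_2\equiv w_5$, $w_3\equiv w_4$ of (iii), I bracket each identity of Lemma~\ref{lem: GN mod gamma5} once more with an element of $\{a,b\}$.  Since the first argument of each identity there lies in $\gamma_1(G)$, the resulting relation lives in $L_5$ and the original ``mod $\gamma_5$'' error becomes a commutator in $\gamma_6$, hence dies.  For example, bracketing Lemma~\ref{lem: GN mod gamma5}(i) with $b$ and using $(\star)$ to eliminate the three-$b$ commutators gives $w_1+w_2+w_4=0$, which combined with $w_1+w_2+w_3=0$ yields $w_3=w_4$; bracketing with $a$ yields analogously $w_2=w_5$ and $w_1=w_6$.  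A systematic run-through of these lifts, supplemented by Jacobi inside $L$, then forces every commutator listed in (i), (ii), (iv) to vanish in $L_5$.

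Finally, parts (v) and (vi) are obtained by lifting Lemma~\ref{lem: GN mod gamma5}(iii)--(v) in the same way.  Bracketing (iv) and (v) with $b$ and substituting the symmetries from (iii) both reduce to the relation $2w_1+6w_2=0$ in $L_5$; the exact relation $w_2\equiv w_1^{3}$ of (v) (rather than its doubling) is pinned down by a third lift, namely bracketing Lemma~\ref{lem: GN mod gamma5}(iii) with $b$ and using a Hall-Witt rearrangement of the last two arguments, together with part (iv), to compare the resulting commutator $[b,a,a,c,b]$ with $[b,a,a,b,c]\equiv 0$.  Combining this with the previous $2w_1+6w_2=0$ yields $w_2=3w_1$ and hence $w_3=-w_1-w_2=-4w_1$, proving (v); substituting these back into any of the earlier relations produces $10w_1=0$, which is (vi).  The main obstacle throughout is the careful bookkeeping of error terms in these lifts: every application of Lemma~\ref{lem: GN mod gamma5} must be arranged so that its first argument is a proper commutator, so that its ``mod $\gamma_5$'' error is automatically a commutator in $\gamma_6$ and disappears in $L_5$.
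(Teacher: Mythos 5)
The paper does not actually prove this lemma: it is quoted verbatim from Gupta and Newman \cite{guptanewman}*{Lemma 2.3}, so there is no internal argument to compare yours against. Judged on its own, your attempt has a genuine gap at its foundation. Your master relation $(\star)$, namely $[u,v,y,x,x]+[u,v,x,y,x]+[u,v,x,x,y]=0$, does not follow from linearising the Engel identity over $\Z$. Substituting $x+y$ into $[u,v,x,x,x]=0$ only yields the six-term relation $A+B=0$, where $A$ is the part of multidegree $(2,1)$ in $(x,y)$ and $B$ the part of multidegree $(1,2)$; substituting $x-y$ then gives $A-B=0$, hence only $2A=0$. There is no integral substitution that isolates $A$ itself, and the weight-$4$ analogue of this phenomenon is visible in the paper's own Lemma \ref{lem: GN mod gamma5}: part (i) is exactly the six-term relation and part (ii) is exactly the doubled three-term relation $[c,a,a,b]^2[c,a,b,a]^2[c,b,a,a]^2\equiv 1$ --- Gupta and Newman do not (and cannot) drop the exponent $2$. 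Since the target relations live in $\gamma_5(G)/\gamma_6(G)$, which by part (vi) has exponent dividing $10$ on the relevant elements, having $2A=0$ does not let you conclude $A=0$. Every subsequent step of your argument, including $w_1+w_2+w_3=0$ in (iii) and the pairwise cancellations feeding into (i), (ii) and (iv), is built on $(\star)$, so the whole derivation is unsupported; the instances of $(\star)$ you use are in fact consequences of the lemma you are trying to prove, not inputs to it.

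Beyond this, the remaining steps are too schematic to constitute a proof even granting $(\star)$. Upgrading the pairwise cancellation $[c,a,b,a,a]+[c,a,a,b,a]=0$ to the individual vanishings in (i) and (ii), and establishing the symmetries $w_3\equiv w_4$, $w_2\equiv w_5$, $w_1\equiv w_6$, is the technical heart of Gupta--Newman's computation and is not achieved by ``a systematic run-through of these lifts, supplemented by Jacobi.'' Likewise, your route to $w_2\equiv w_1^3$ in (v) produces $3w_2=[[b,a,a],[c,b]]$ after comparing $[b,a,a,c,b]$ with $[b,a,a,b,c]$, and you give no argument that this bracket of two commutators vanishes or equals $9w_1$; even if it did, you would only obtain $3(w_2-3w_1)=0$, which does not pin down $w_2=3w_1$ in a group with $10$-torsion. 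Since the paper relies on this lemma as an external citation, the honest course is either to cite it as the authors do or to reproduce Gupta and Newman's actual collection computations in full.
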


The following well-known lemma will be useful throughout the paper.

\begin{lem}
\label{lem: first entry}
 Let $G$ be a group, and let $x_1,\ldots,x_n,y\in G$.
 Then,
 $$
 [x_1,\ldots,x_n,y]
 \in
 \langle [y,x_{\sigma(1)},\ldots,x_{\sigma(n)}]\mid \sigma\in S_n\rangle\gamma_{n+2}(G),
 $$
 where  $S_n$ is the symmetric group of degree $n$.
 \end{lem}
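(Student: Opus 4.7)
The statement is classical and I would prove it by induction on $n$. The case $n=1$ is immediate from $[x_1,y]=[y,x_1]^{-1}$. For the inductive step, I set $u=[x_1,\ldots,x_{n-1}]\in\gamma_{n-1}(G)$ and apply the Hall--Witt identity to the triple $(u,x_n,y)$. Because $u$ has weight $n-1$, the conjugations appearing in Hall--Witt are trivial modulo $\gamma_{n+2}(G)$, and after a short computation one obtains the group-theoretic shadow of the Jacobi identity
\[
[x_1,\ldots,x_n,y]=[u,x_n,y]\equiv [u,y,x_n]\cdot [u,[x_n,y]]\pmod{\gamma_{n+2}(G)}.
\]

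I would then treat the two factors separately. For the first factor $[u,y,x_n]=[x_1,\ldots,x_{n-1},y,x_n]$, the inductive hypothesis applied to $[x_1,\ldots,x_{n-1},y]$ writes it as a product $w$ of commutators $[y,x_{\sigma(1)},\ldots,x_{\sigma(n-1)}]$ with $\sigma\in S_{n-1}$, modulo $\gamma_{n+1}(G)$. Commutating with $x_n$ and using that $\gamma_{n+1}(G)/\gamma_{n+2}(G)$ is central in $G/\gamma_{n+2}(G)$ --- so the bracket with $x_n$ distributes over products and the $\gamma_{n+1}(G)$ error term is killed by one further bracket --- yields a product of $[y,x_{\sigma(1)},\ldots,x_{\sigma(n-1)},x_n]$, each of which is already of the required form.

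The second factor $[u,[x_n,y]]$ is the main obstacle, since the induction hypothesis is phrased for a ``simple'' final entry $y\in G$, whereas $[x_n,y]\in\gamma_2(G)$ is compound. My plan is to resolve this by proving, by exactly the same induction, the natural strengthening of the lemma in which the last entry is an element $z\in\gamma_k(G)$ and the congruence is taken modulo $\gamma_{n+k+1}(G)$. Applied to $[u,[x_n,y]]$ with $k=2$, this expresses it modulo $\gamma_{n+2}(G)$ as a product of commutators $[[x_n,y],x_{\sigma(1)},\ldots,x_{\sigma(n-1)}]$ with $\sigma\in S_{n-1}$. Writing $[x_n,y]=[y,x_n]^{-1}$ and iteratively using the standard congruence $[c^{-1},d]\equiv [c,d]^{-1}$ modulo higher-weight commutators (valid since conjugation of an element of $\gamma_{n+1}(G)$ by anything in $G$ is trivial modulo $\gamma_{n+2}(G)$), each such factor becomes $[y,x_n,x_{\sigma(1)},\ldots,x_{\sigma(n-1)}]^{-1}$ modulo $\gamma_{n+2}(G)$, which is of the desired form and completes the induction.
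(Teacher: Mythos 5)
The paper states this lemma without proof, labelling it ``well-known'', so there is no proof of record to compare yours against; I can only assess your argument on its own terms, and it is correct. The base case, the Hall--Witt/Jacobi congruence $[u,x_n,y]\equiv[u,y,x_n][u,[x_n,y]]\pmod{\gamma_{n+2}(G)}$ for $u\in\gamma_{n-1}(G)$, and the treatment of the first factor are all standard and sound (the needed commutation facts, namely that $\gamma_{n+1}(G)/\gamma_{n+2}(G)$ is central and that $[\gamma_n(G),\gamma_n(G)]\le\gamma_{n+2}(G)$ for $n\ge 2$, do hold). You correctly identify the one genuine obstacle, the factor $[u,[x_n,y]]$ with a compound last entry, and your fix --- strengthening the induction to allow the last entry to lie in $\gamma_k(G)$ with the congruence taken modulo $\gamma_{n+k+1}(G)$ --- is exactly the right move: the induction on the number of $x_i$'s then closes, since the second factor in the inductive step only deepens $k$ while shortening the commutator, and at the top level $[[x_n,y],x_{\sigma(1)},\ldots,x_{\sigma(n-1)}]\equiv[y,x_n,x_{\sigma(1)},\ldots,x_{\sigma(n-1)}]^{-1}$ modulo $\gamma_{n+2}(G)$ lands in the prescribed generating set. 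This is a proof plan rather than a written-out proof, so if you flesh it out you should state the strengthened lemma explicitly and verify the weight bookkeeping in the Jacobi congruence for general $k$ (it works: the error terms lie in $\gamma_{2m+k-1}(G)$ and $\gamma_{m+2k+1}(G)$, both contained in $\gamma_{m+k+1}(G)$). An alternative, equally standard route is to pass to the associated graded Lie ring $\bigoplus\gamma_i(G)/\gamma_{i+1}(G)$ and use the genuine Jacobi identity plus anticommutativity there, which compresses the same computation; your version has the merit of staying entirely inside the group.
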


\section{Powerful \texorpdfstring{$3$}{3}-Engel metabelian groups}
\label{sec: metabelian}

This section is devoted to study powerful $3$-Engel metabelian groups.
We start by proving our first main result for $n$-Engel groups with cyclic derived subgroups.

\begin{proof}[Proof of Theorem \ref{thm: derived cyclic}]
Let $G=\langle x_1,\ldots,x_r\rangle$ and $G'=\langle y\rangle$, where $r\ge 2$.
 For $1\le i<j\le r$ and $1\le k\le r$, write
 \begin{equation}
    \label{eq: relations cyclic}
    [x_i,x_j]=y^{p^{\gamma(i,j)}\delta(i,j)}\ \ \text{ and }\ \ [y,x_k]=y^{p^{\alpha(k)}\beta(k)},
 \end{equation}
 where $\gamma(i,j),\alpha(k)\ge 0$ and $p$ does not divide $\delta(i,j)$ and $\beta(k)$.
 We assume $\gamma_{n+2}(G)=1$, and claim that
 $$
 [x_{i_1},\ldots,x_{i_{n+1}}]=1
 $$
 for any $1\le i_1,\ldots,i_{n+1}\le r$.
 The relations in (\ref{eq: relations cyclic}) give
 \begin{equation}
    \label{eq: explicit cyclic}
    [x_{i_1},\dots, x_{i_{n+1}}]=y^{p^{\gamma(i_1,i_2)+\alpha(i_3)+\dots+\alpha(i_{n+1})}\delta(i_1,i_2)\beta(i_3)\cdots \beta(i_{n+1})}.
 \end{equation}
 Let $1\le j\le n$ be such that $\alpha(i_j)=\min\{\alpha(i_1), \alpha(i_2),\dots, \alpha(i_{n+1})\}$.
 We will distinguish two cases in turn: $j \in \{1,2\}$ and $j\geq 3$.

 Suppose first $j=1$. Since the group $G$ is $n$-Engel, it follows that
 $$
 1=[x_{i_2},x_{i_1},\overset{n}{\ldots}, x_{i_1}]=y^{-p^{\gamma(i_1,i_2)+(n-1)\alpha(i_1)}\delta(i_1,i_2)\beta(i_1)^{n-1}},
 $$
 so that $y^{p^{\gamma(i_1,i_2)+(n-1)\alpha(i_1)}}=1$.
 Since $(n-1)\alpha(i_1)\le \alpha(i_3)+\cdots+\alpha(i_{n+1})$, we deduce that $[x_{i_1},\dots, x_{i_{n+1}}]=1$, as claimed. If $j=2$, then $[x_{i_1},\dots, x_{i_{n+1}}]=[x_{i_2},x_{i_1},\dots, x_{i_{n+1}}]^{-1}$, which follows from the case $j=1$.

 Now, suppose $j\ge 3$. It immediately follows from (\ref{eq: explicit cyclic}) that
 \begin{equation}
    \label{eq: order}
    [x_{i_1},\dots, x_{i_{n+1}}]
    =
    [x_{i_1},x_{i_2},x_{i_{\sigma(1)+2}},\dots, x_{i_{\sigma(n-1)+2}}]
 \end{equation}
 for any $\sigma\in S_{n-1}$, so we may further assume that $j=3$.
 Now, since $\gamma_{n+2}(G)=1$, the Hall-Witt identity gives
 $$
 [x_{i_1},x_{i_2},x_{i_3},\ldots,x_{i_{n+1}}]=[x_{i_3},x_{i_2},x_{i_1},\ldots,x_{i_{n+1}}][x_{i_3},x_{i_1},x_{i_2},\ldots,x_{i_{n+1}}]^{-1},
 $$
 and the result follows from the previous case.
 
 This bound cannot be further sharpened due to Example \ref{ex: cyclic} below.
\end{proof}

\begin{example}
\label{ex: cyclic}
Let $p$ be either an odd prime or $4$, and define $G=\langle a,b\mid [a,b]=a^p,a^{p^{n}}=b^{p^{n-1}}=1\rangle$.
Then $G$ is a powerful $n$-Engel group with cyclic derived subgroup of nilpotency class $n$.
\end{example}

If the derived subgroup of our $3$-Engel group $G$ is not cyclic, then the search of best upper bounds for the nilpotency class of $G$ is much more involved.
However, the following result due to Gupta and Newman gives some insights for the structure of $G$ when $G'$ is abelian.

\begin{thm}[Gupta and Newman \mbox{\cite{GuNe66}}]
 \label{thm: gamma4^16=1}
 Let $G$ be a metabelian $3$-Engel group.
 Then we have
 $$
 \gamma_4(G)^{16}=\gamma_5(G)^2=1.
 $$
\end{thm}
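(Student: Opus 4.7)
The plan is to exploit the metabelian hypothesis: $G'$ becomes a module over the commutative ring $R=\mathbb Z[G/G']$ via conjugation, and writing $\tilde x=\bar x-1\in R$ for $x\in G$ one has
\[
[c,a_1,\ldots,a_n]=[c,a_1]\cdot\tilde a_2\cdots\tilde a_n
\]
in additive notation on $G'$. In particular these commutators are symmetric in $a_1,\ldots,a_n$; we have $\gamma_4(G)=G'\cdot I^{2}$ and $\gamma_5(G)=G'\cdot I^{3}$, where $I$ is the augmentation ideal; and the $3$-Engel identity reads $\tilde x^{\,3}\in\operatorname{Ann}_R(G')$ for every $x\in G$. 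The theorem becomes the ring-theoretic statements $16\,I^{2}\subseteq\operatorname{Ann}_R(G')$ and $2\,I^{3}\subseteq\operatorname{Ann}_R(G')$; since this is a universal statement, I would reduce to finitely generated (hence nilpotent) subgroups throughout.

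For $\gamma_5(G)^{2}=1$, the key observation is that under the metabelian symmetry the six length-$5$ commutators $w_1,\ldots,w_6$ of Lemma \ref{lem: GN mod gamma6} all coincide. Part~(iii) of that lemma then gives $w_1\equiv w_1^{-2}$ and part~(v) gives $w_1\equiv w_1^{3}$, both modulo $\gamma_6(G)$; combining, $w_1\equiv 1\pmod{\gamma_6(G)}$, so $[c,a,a,b,b]\in\gamma_6(G)$. Polarising $b\to bd$ and using that both $[c,a,a,b,b]$ and $[c,a,a,d,d]$ lie in $\gamma_6(G)$, I would extract $[c,a,a,b,d]^{2}\in\gamma_6(G)$; a further polarisation with a fourth variable yields $[c,a,b,d,e]^{2}\in\gamma_6(G)$. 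Iterating the same procedure one level deeper (on $\gamma_6(G)$ modulo $\gamma_7(G)$) and invoking nilpotency of finitely generated subgroups eliminates the ``$\bmod\gamma_6(G)$'' and gives $\gamma_5(G)^{2}=1$.

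For $\gamma_4(G)^{16}=1$, I would combine $\gamma_5(G)^{2}=1$ with the metabelian collapse of Lemma \ref{lem: GN mod gamma5}(v), which reads $[c,a,a,b]^{2}\equiv[c,a,a,b]^{4}\pmod{\gamma_5(G)}$. Thus $[c,a,a,b]^{2}\in\gamma_5(G)$ and $[c,a,a,b]^{4}\in\gamma_5(G)^{2}=1$. For the remaining generator type $[c,a,b,d]$ of $\gamma_4(G)$ (three distinct entries), I would polarise $[c,ab,ab,ab]=1$ by expanding $\widetilde{ab}=\tilde a+\tilde b+\tilde a\tilde b$ and reducing modulo $\tilde a^{3},\tilde b^{3}\in\operatorname{Ann}_R(G')$; the resulting identity expresses $[c,a,b,d]$ in terms of $[c,a,a,b]$-type commutators (of exponent $4$) and $\gamma_5(G)$-elements (of exponent $2$). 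Collecting coefficients gives the exponent bound $16$, and hence $\gamma_4(G)^{16}=1$.

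The main obstacle is the careful bookkeeping of integer coefficients in the polarisations, since the sharpness of the exponents $2$ and $16$ leaves no room for losing or gaining factors of $2$; in particular, one must distinguish the $[c,a,a,b]$-type commutators (exponent $4$) from the $[c,a,b,d]$-type commutators (exponent $16$). A secondary difficulty is eliminating the ``$\bmod\gamma_6(G)$'' in the first step, which requires iterating the argument down the lower central series and invoking local nilpotency of $3$-Engel groups.
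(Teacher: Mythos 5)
A preliminary remark: the paper offers no proof of this statement --- it is imported verbatim from Gupta and Newman \cite{GuNe66} --- so your proposal can only be measured against the original argument, not against anything in this text. Your framework (viewing $G'$ as a module over $\mathbb{Z}[G/G']$, with $\gamma_{k+2}(G)=G'I^{k}$, and polarising the Engel law) is the standard one, and your first deduction is sound: metabelian symmetry gives $w_1=w_2=w_3$ and $w_4=w_5=w_6$, Lemma~\ref{lem: GN mod gamma6}(iii) glues the two triples together, and then (iii) and (v) force $w_1^{3}\equiv w_1^{2}\equiv 1$, hence $[c,a,a,b,b]\in\gamma_6(G)$. But note that $[c,a_1,\ldots,a_n]$ is symmetric only in $a_3,\ldots,a_n$ (equivalently in the entries acted on through $I$), \emph{not} in $a_1,\ldots,a_n$ as you assert. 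This matters twice: it is why you need (iii) at all to identify $w_1$ with $w_4$, and it invalidates your reading of Lemma~\ref{lem: GN mod gamma5}(v) as ``$[c,a,a,b]^{2}\equiv[c,a,a,b]^{4}$''. The exact metabelian Jacobi identity gives $[c,b,a,a]=[c,a,a,b][b,a,a,c]^{-1}$, so the factor $[b,a,a,c]$ must first be eliminated via Lemma~\ref{lem: GN mod gamma5}(iii); doing so yields $[c,a,a,b]^{8}\in\gamma_5(G)$, i.e.\ exponent $16$ for this type as well, and your claimed exponent $4$ is unjustified.

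The more serious gap is the removal of the moduli. From $\gamma_5(G)^{2}\le\gamma_6(G)$, iterating ``one level deeper'' gives at best $\gamma_6(G)^{2}\le\gamma_7(G)$ and so on; for $u\in\gamma_5(G)$ this chain only yields $u^{2^{k}}=1$ by nilpotency of finitely generated subgroups, i.e.\ that $\gamma_5(G)$ is a $2$-group, which is strictly weaker than $\gamma_5(G)^{2}=1$. To obtain the exact exponent one cannot argue modulo the next term of the lower central series: one must derive the weight-$5$ relations as \emph{exact} identities, which is feasible here precisely because the expansions $\widetilde{xy}=\tilde x+\tilde y+\tilde x\tilde y$ and $[c,xy]=[c,x]+[c,y]+[c,x]\tilde y$ are exact in the metabelian setting, so the elementwise relations $[y,x]\tilde x^{2}=0$ (which you need in addition to $\tilde x^{3}\in\operatorname{Ann}_R(G')$ to see any weight-$4$ consequence) polarise into exact polynomial identities that can be chased degree by degree --- this is what Gupta and Newman actually do. Finally, your last polarisation produces $[c,a,e,b,d]^{2}[c,e,a,b,d]^{2}\in\gamma_6(G)$, not each factor separately; combining this with the Jacobi identity in the obvious ways gives only $[c,a,e,b,d]^{6}\in\gamma_6(G)$, and extracting the exponent $2$ (and the exponent $16$ on $\gamma_4$) requires further relations. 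That coefficient chase, which you defer as ``bookkeeping'', is the entire content of the theorem; as written, the proposal establishes $[c,a,a,b,b]\in\gamma_6(G)$ and little beyond.
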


As a direct consequence of this theorem, if $G$ is a $3$-Engel metabelian $p$-group, then it is nilpotent of class at most $3$ unless $p=2$.
This bound is clearly best possible, as all groups of nilpotency class $3$ are $3$-Engel.
The study of the nilpotency class of metabelian $3$-Engel powerful groups will be then reduced to the case $p=2$.
In such a case, we have
$$
\gamma_6(G)=[\gamma_5(G), G] \leq \gamma_5(G)^{4}=1,
$$
where the last inequality holds by Shalev’s Interchange Lemma \cite{Shalev1993OnAF}. Therefore, the nilpotency class of $G$ can be at most 5.
As we show in Theorem \ref{thm: metabelian}, this bound can be further improved.
Before proving it, we need the following lemma.

\begin{lem}
 \label{lem:metabsquarecomm}
 Let $G$ be a metabelian $3$-Engel $2$-group.
 Then for all $a,b,c,d \in G$ we have
 $$
 [a,b,c,d]^2[a,c,b,d]^2[a,d,c,b]^2\in\gamma_5(G).
 $$
\end{lem}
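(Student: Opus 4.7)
The plan is to work additively in the abelian group $\gamma_4(G)/\gamma_5(G)$, and derive the target identity by polarising relation (i) of Lemma \ref{lem: GN mod gamma5} in the variable $a$. Throughout I will use the standard metabelian symmetry $[x,y,z,w]=[x,y,w,z]$, which holds because $G'$ is abelian and the $G/G'$-module actions of $z-1$ and $w-1$ on $G'$ commute.

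Concretely, I would substitute $a\mapsto a_1a_2$ in
$$[c,a,a,b][c,a,b,a][c,b,a,a][c,a,b,b][c,b,a,b][c,b,b,a]\equiv 1\pmod{\gamma_5(G)}$$
and expand each factor using the standard formula $[u,v_1v_2]\equiv[u,v_1][u,v_2][u,v_1,v_2]$ and its iterates, discarding terms that land in $\gamma_5(G)$. The three factors $[c,a,b,b]$, $[c,b,a,b]$ and $[c,b,b,a]$ in which $a$ occurs only once are linear in $a$ modulo $\gamma_5(G)$, so they contribute nothing to the bilinear part in $(a_1,a_2)$ obtained by subtracting off the specialisations $a=a_1$ and $a=a_2$. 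A direct expansion shows that the remaining three factors contribute, respectively, $[c,a_1,a_2,b]+[c,a_2,a_1,b]$, $[c,a_1,b,a_2]+[c,a_2,b,a_1]$ and $[c,b,a_1,a_2]+[c,b,a_2,a_1]$, the last of which collapses to $2[c,b,a_1,a_2]$ via metabelian symmetry.

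Summing and using metabelian symmetry once more to identify $[c,a_1,b,a_2]=[c,a_1,a_2,b]$ and $[c,a_2,b,a_1]=[c,a_2,a_1,b]$, the bilinear identity becomes
$$2[c,a_1,a_2,b]+2[c,a_2,a_1,b]+2[c,b,a_1,a_2]\equiv 0\pmod{\gamma_5(G)}.$$
Relabelling $(c,a_1,a_2,b)\mapsto(a,b,c,d)$ and using $[a,d,b,c]=[a,d,c,b]$ gives the desired conclusion.

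The main obstacle is the bookkeeping in the polarisation step: one must carefully verify that the higher-order corrections generated by the non-commutative expansion of expressions like $[c,a_1a_2,a_1a_2,\ldots]$, for instance the summands involving $[c,a_1,a_2]\in\gamma_3(G)$ arising from $[c,a_1a_2]\equiv[c,a_2][c,a_1][c,a_1,a_2]$, all fall in $\gamma_5(G)$ once the remaining commutators are formed. Once this is set up the derivation is essentially forced, and it is precisely the metabelian symmetry that collapses pairs of mirror-image cross terms into single ones with a coefficient $2$, producing the squared form in the conclusion.
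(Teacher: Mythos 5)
Your proof is correct and follows essentially the same route as the paper: both polarise relation (i) of Lemma \ref{lem: GN mod gamma5} in its repeated variable (the paper substitutes $bc$ into the $a$-slot, you substitute $a_1a_2$), discard the linear factors, and collapse the resulting cross terms using the metabelian symmetry $[x,y,z,w]=[x,y,w,z]$. The bookkeeping you flag does work out exactly as you describe, since every correction term such as $[c,a_1,a_2,x,y]$ lands in $\gamma_5(G)$.
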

\begin{proof}
 We may assume $\gamma_5(G)=1$.
 Then, Lemma \ref{lem: GN mod gamma5}(i) yields
 $$
 1=[a,bc,bc,d]^2[a,d,bc,bc][a,bc,d,d][a,d,d,bc]^2.
 $$
 Expanding the commutators and applying Lemma \ref{lem: GN mod gamma5}(i) again, we obtain
 \begin{equation*}
 \begin{split}
     1=&[a,b,b,d]^2[a,d,b,b][a,b,d,d][a,d,d,b]^2
     [a,c,c,d]^2[a,d,c,c][a,c,d,d][a,d,d,c]^2\\
     &[a,b,c,d]^2[a,c,b,d]^2[a,d,b,c][a,d,c,b]
     =[a,b,c,d]^2[a,c,b,d]^2[a,d,c,b]^2,
 \end{split}
 \end{equation*}
 as desired.
\end{proof}

\begin{proof}[Proof of Theorem \ref{thm: metabelian}]
 By Theorem \ref{thm: gamma4^16=1}, we only need to show the result for $p=2$.
 Therefore, since $G$ is powerful, we can find generators $x_1,\ldots,x_n$ of $G$ such that $G'=\langle x_1^{4\alpha_1}, \dots, x_k^{4\alpha_k} \rangle$, with $1 \leq k \leq n$, and $\alpha_1,\dots,\alpha_k\in\Z$. Also, we can assume $k>1$ by Theorem \ref{thm: derived cyclic}.
 Thus, it suffices to show that $[x_i^{4\alpha_i}, a, b, c]=1$ for all $a,b,c, \in G$.
 For that purpose, we may assume $\gamma_5(G)^2=\gamma_6(G)=1$, so by the Hall-Petresco identity, we obtain
 $$
 [x_i^{4\alpha_i},a,b,c]=[x_i,a,b,c]^{4\alpha_i}
 $$
 for all $1\le i\le k$.
 Now, from Lemma \ref{lem:metabsquarecomm}, we deduce
 \begin{align*}
  [x_i,a,b,c]^{4\alpha_i}&=([a,x_i,b,c]^2)^{-2\alpha_i}\\
  &=([a,b,x_i, c]^{-2}[a,c,b,x_i]^{-2}g)^{-2\alpha_i}\\
  &=[a,b,x_i, c]^{4\alpha_i}[a,c,b,x_i]^{4\alpha_i}
 \end{align*}
 for some $g\in\gamma_5(G)$.
 Since, again by the Hall-Petresco identity,
 $$
 [a,b,x_i, c]^{4\alpha_i}[a,c,b,x_i]^{4\alpha_i}=[a,b,x_i^{4\alpha_i}, c][a,c,b,x_i^{4\alpha_i}]=1,
 $$
 the theorem follows.
\end{proof}

As pointed out in Remark \ref{rem: p odd or r=3}, we now show that the upper bound obtained for the nilpotency class of powerful $3$-Engel metabelian $2$-groups remains the same when the rank of $G$ is $3$, even if we relax the condition that $G$ is powerful.

\begin{lem}
 \label{lem: metabelian 3-generator}
 Let $G = \langle a, b, c \rangle$ be a metabelian 3-Engel 2-group.
 Then $\gamma_5(G)=1$.
\end{lem}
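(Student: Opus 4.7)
The plan is to show every left-normed weight-$5$ commutator $[z_1, z_2, z_3, z_4, z_5]$ with $z_i \in \{a, b, c\}$ is trivial, since these generate $\gamma_5(G)$. As noted in the text preceding Lemma~\ref{lem:metabsquarecomm}, Theorem~\ref{thm: gamma4^16=1} combined with Shalev's Interchange Lemma gives $\gamma_5(G)^2 = 1$ and $\gamma_6(G) \le \gamma_5(G)^4 = 1$, so $\gamma_5(G)$ is central and elementary abelian.

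The main reduction is the following consequence of metabelianness. For $u \in G'$ and $x, y \in G$, the identity $[u, xy] = [u, yx]$ (which follows from $G'' = 1$) yields $[u, x, y] = [u, y, x]$, so $[u, x_1, \ldots, x_k]$ is symmetric in $x_1, \ldots, x_k$; moreover, $[u^{-1}, x] = [u, x]^{-1}$. Taking $u = [z_1, z_2]$, we deduce that $[z_1, z_2, z_3, z_4, z_5]$ is symmetric in $(z_3, z_4, z_5)$ and is inverted by the swap $z_1 \leftrightarrow z_2$; the sign disappears since $\gamma_5(G)^2 = 1$. Hence the commutator is determined by the pair of multisets $\{z_1, z_2\}$ and $\{z_3, z_4, z_5\}$.

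Now I would enumerate the cases by the multiset of all five entries. If $z_1 = z_2$ the commutator is trivial, and if some letter $x$ appears at least three times in it, one finishes using the $3$-Engel law: either $\{z_3, z_4, z_5\} = \{x, x, x\}$ so that $[u, x, x, x] = 1$, or, by symmetrising the last three entries and possibly swapping the first two, the commutator takes the form $[y, x, x, x, *]$ with $[y, x, x, x] = 1$ by $3$-Engel. This dispatches every multiset of shape $(5,0,0)$, $(4,1,0)$, $(3,2,0)$ or $(3,1,1)$. The only remaining case is the multiset $\{a, a, b, b, c\}$, whose three non-trivial splits reduce, after symmetrising the last three entries and applying the first-two swap, to $[b, a, a, b, c]$ (trivial by Lemma~\ref{lem: GN mod gamma6}(iv), since $\gamma_6(G) = 1$) and to the Gupta--Newman commutators $w_1 = [c, a, a, b, b]$ and $w_4 = [c, b, a, a, b]$.

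The main obstacle is therefore to show $w_1 = 1$. For this I would combine the metabelian symmetry in the last three entries with Lemma~\ref{lem: GN mod gamma6}. Symmetry forces $w_1 = w_2 = w_3$ and $w_4 = w_5 = w_6$, and part~(iii) of the lemma gives $w_3 \equiv w_4$, so all six $w_i$ coincide. The same part also yields $w_3 \equiv w_1^{-1} w_2^{-1} = w_1^{-2}$; combined with $w_3 = w_1$ this gives $w_1^3 = 1$, which together with $w_1^2 = 1$ forces $w_1 = 1$. Consequently every weight-$5$ commutator in the generators is trivial, and $\gamma_5(G) = 1$.
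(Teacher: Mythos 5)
Your proof is correct and follows essentially the same route as the paper's: discard commutators with a triple entry via Lemma~\ref{lem: GN mod gamma6}, use the metabelian symmetry of the last three entries to reduce everything to the commutators $w_i$, and then combine that symmetry with a Gupta--Newman relation to show $w_1$ has odd order, hence is trivial in a $2$-group. The only cosmetic difference is that you invoke part~(iii) of Lemma~\ref{lem: GN mod gamma6} to get $w_1^3=1$, whereas the paper uses part~(v) to get $w_1^5=1$.
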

\begin{proof}
 We are going to show that all commutators of length 5 in $\{a,b,c\}$ are trivial. Since $G$ is $3$-Engel, Lemma \ref{lem: GN mod gamma6} already shows that any commutator with a triple entry is trivial.
 Therefore, by Lemma \ref{lem: first entry}, and since $G$ is metabelian, it remains to show that commutators of the form $[x,y,y,z,z]$ with $x,y,z\in\{a, b,c\}$ are trivial.
 Since $[x,y,y,z,z]=[x,y,z,z,y]$, and, additionally, by Lemma \ref{lem: GN mod gamma6}, we have $[x,y,z,z,y]=[x,y,y,z,z]^{-4}$, it follows that $[x,y,y,z,z]^5=1$.
 Since $G$ is a $2$-group, the result follows.
\end{proof}

We end this section by noting that this bound is best possible, as the following example shows.

\begin{example}
 \label{ex: p=2,r=3,powerful}
 Let $p=2$ and let $G=\langle a,b,c\rangle$ be a group with the following presentation:
 \begin{center}
 \begin{tabular}{lll}
    $[a,b]=a^{-p^5}$, & $[a,c]=b^{3p^6}a^{p^4}$, & $[b,c]=b^{p^7}a^{p^2}$,\\
    $[a^{p^2},b^{p^6}]=1$, & $[a^{p^2},b]=a^{-p^7}$, & $[a^{p^2},c]=a^{-p^2}b^{3p^8}a^{17p^2}$,\\
    $[b^{p^6},a]=a^{p^{11}}$, & $[b^{p^6},c]=a^{p^8}$, & \\
    $a^{p^{12}}=1$, & $b^{p^{11}}=1$, & $c^{p^{10}}=1$. 
 \end{tabular}
 \end{center}
 Then, $G$ is a powerful $3$-Engel $2$-group of nilpotency class $4$.
\end{example}

\section{Powerful \texorpdfstring{$3$}{3}-Engel groups of rank \texorpdfstring{$3$}{3}}

\label{sec: powerful rank 3}

The aim of this section is proving Theorem \ref{thm: non-metabelian}(i), (iii) and (iv).
For that purpose, let $G$ be a \begin{equation}
\label{conditions}
\text{powerful }3\text{-Engel }p\text{-group such that }\left\{
\begin{aligned}
    \gamma_5(G)=\gamma_4(G)^p=1 & \text{ if }p\neq 2\\
    \gamma_6(G)=\gamma_5(G)^2=1 & \text{ if }p=2.
\end{aligned}
\right.
\end{equation}

\noindent We will fix for the rest of the section $a,b,c\in G$ and $i,j,k\in\mathbb{N}$ with $i\le j\le k$ such that $G=\langle a,b,c\rangle$ and $G'=\langle a^{p^i},b^{p^j},c^{p^k}\rangle$.
Also, since $G$ is powerful, let $r,s,t,u,v,w,\alpha,\beta,\gamma\in\Z$ be such that
\begin{equation}
\label{eq: notation}
 [a,b]=a^{rp^{i}}b^{sp^{j}}c^{tp^{k}}, \quad [c,a]=a^{up^{i}}b^{vp^{j}}c^{wp^{k}}, \quad [c,b]=a^{\alpha p^{i}}b^{\beta p^{j}}c^{\gamma p^{k}}.
\end{equation}
The following remark will be frequently used.

\begin{rem}\label{rem: estrella}
Suppose that $\gamma_n(G)^p=\gamma_{n+1}(G)=1$ for some $n\ge 2$.
If $x^{p^i}\in G'$ for some $x\in G$ and $i\ge 0$, then
$$
1=[x^{p^i},y_1,{\ldots},y_{n-2}]^p=[x,y_1,\ldots,y_{n-2}]^{p^{i+1}}
$$
for all $y_1,\ldots,y_{n-2}\in G$.
\end{rem}

First we will establish Theorem \ref{thm: non-metabelian}(i) and (iii), i.e., we will show that for $G$ as in (\ref{conditions}) with $p\neq 2$, we have $\gamma_4(G)=1$.
For that purpose, we first need the following two lemmas.

\begin{lem}
\label{lem: i<j}
Let $G$ be as in (\ref{conditions}) with $p\neq 2$.
If $i<j\le k$, then $\gamma_4(G)=1$.
\end{lem}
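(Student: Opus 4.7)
Since $\gamma_5(G) = \gamma_4(G)^p = 1$, the subgroup $\gamma_4(G)$ is central in $G$ and is an elementary abelian $p$-group, hence an $\F_p$-vector space. It therefore suffices to show that every weight-$4$ commutator $[x_1, x_2, x_3, x_4]$ with $x_t \in \{a, b, c\}$ is trivial.

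My first step is to use the structural relations in (\ref{eq: notation}) to expand the inner commutator $[x_1, x_2]$ as a power-product of the $G'$-generators $a^{p^i}, b^{p^j}, c^{p^k}$. Because $\gamma_5(G) = 1$ forces $[\gamma_3(G), \gamma_2(G)] = 1$, the map $[\,\cdot\,, x_3, x_4] : G' \to \gamma_4(G)$ is bilinear, yielding
\[
 [x_1, x_2, x_3, x_4] = [a^{p^i}, x_3, x_4]^{\alpha_1}\,[b^{p^j}, x_3, x_4]^{\alpha_2}\,[c^{p^k}, x_3, x_4]^{\alpha_3},
\]
for some $\alpha_1, \alpha_2, \alpha_3 \in \Z$ determined by $(x_1, x_2)$ via (\ref{eq: notation}).

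For each factor $[y^{p^m}, x_3, x_4]$ with $y \in \{a, b, c\}$ and $m \in \{i, j, k\}$, a Hall--Petresco expansion (together with $[y, x_3, y, y, y] \in \gamma_5(G) = 1$ and $\gamma_4(G)$ being central) gives, modulo $\gamma_5(G) = 1$,
\[
 [y^{p^m}, x_3, x_4] = [y, x_3, x_4]^{p^m} \cdot [y, x_3, y, x_4]^{\binom{p^m}{2}}.
\]
Since $G$ is genuinely $3$-generated as a $p$-group, no generator can lie in the Frattini subgroup, so $i, j, k \ge 1$; for $p$ odd this makes $\binom{p^m}{2}$ divisible by $p$, and the correction dies in $\gamma_4(G)^p = 1$. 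Thus $[y^{p^m}, x_3, x_4] = [y, x_3, x_4]^{p^m}$. Combined with Remark \ref{rem: estrella} applied with $n = 4$ (which gives $[y, x_3, x_4]^{p^{m+1}} = 1$), this reduces the computation to an $\F_p$-linear combination of basic terms $[y, x_3, x_4]^{p^m}$.

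The final step is to run this expansion for each relevant $(x_1, x_2, x_3, x_4)$ and to combine the resulting equations with the Gupta--Newman identities of Lemma \ref{lem: GN mod gamma5}, which relate the six basic weight-$4$ commutators on any triple of letters. The hypothesis $i < j \le k$ is essential here: it creates a strict gap between the $p$-adic valuations of the $a$-contribution (tied to $p^i$) and those of the $b$- and $c$-contributions (tied to $p^j$ and $p^k$), so that once one feeds these expansions back through the Gupta--Newman relations (where $2$ and small integers are invertible because $p$ is odd), the system collapses and forces each basic commutator to be trivial. I expect the main obstacle to be the bookkeeping in this last step: one has to verify, case by case over the possible arrangements of $\{x_1, x_2, x_3, x_4\}$ in $\{a, b, c\}$, that the combined identities are genuinely overdetermined, and it is precisely the strict inequality $i < j$ that closes each case.
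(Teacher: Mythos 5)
Your setup is sound and coincides with the paper's machinery: expanding $[x_1,x_2]$ via (\ref{eq: notation}) into powers of $a^{p^i},b^{p^j},c^{p^k}$, reducing $[y^{p^m},x_3,x_4]$ to $[y,x_3,x_4]^{p^m}$, and invoking Remark \ref{rem: estrella}. But the argument stops exactly where it needs to start. After the expansion you are left with a product $[a,x_3,x_4]^{\alpha_1 p^i}[b,x_3,x_4]^{\alpha_2 p^j}[c,x_3,x_4]^{\alpha_3 p^k}$, and none of these factors is killed by Remark \ref{rem: estrella} alone: the remark only yields $[y,\cdot,\cdot]^{p^{m+1}}=1$ when $y^{p^m}\in G'$, so each factor is merely of order dividing $p$. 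Your assertion that the ``strict gap in $p$-adic valuations'' makes ``the system collapse'' is a hope, not a proof, and you concede as much by deferring the ``bookkeeping.'' That bookkeeping is the entire content of the lemma; for comparison, when $i=j=k$ the analogous verification is Theorem \ref{thm: r=3, p neq2} and requires a genuine multi-case analysis, so there is no reason to expect the general system to be ``overdetermined'' without exhibiting the collapse.

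The missing idea is a specific choice of which commutators to attack first. The paper considers $[x,y,a,z]$, i.e.\ commutators with $a$ in the \emph{third} entry: writing $[x,y]=a^{\sigma_1 p^i}b^{\sigma_2 p^j}c^{\sigma_3 p^k}$, the $a$-component commutes with $a$ and drops out, giving $[x,y,a,z]=[a,b,z]^{-\sigma_2 p^j}[a,c,z]^{-\sigma_3 p^k}$. Now Remark \ref{rem: estrella} is applied with first entry $a$ (not $b$ or $c$), yielding $[a,\cdot,\cdot]^{p^{i+1}}=1$, and since $i<j\le k$ both factors vanish --- this is the one and only place the hypothesis $i<j$ enters. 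Having $[c,b,a,b]=[c,b,a,a]=[b,c,a,c]=1$, parts (iv) and (v) of Lemma \ref{lem: GN mod gamma5} (with $p$ odd, so that the exponent $4$ acts invertibly on the elementary abelian group $\gamma_4(G)$) propagate triviality to all remaining weight-$4$ commutators. Without identifying this pivot, your proposal does not close.
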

\begin{proof}
By \cite[Lemma 2.1]{guptanewman}, commutators of length $4$ in $\{a,b,c\}$ with only two different entries are all trivial.
We focus, then, on commutators of length $4$ in $\{a,b,c\}$ with $3$ different entries.

Observe that
$$
[x,y,a,z]=[a^{\sigma_1 p^i}b^{\sigma_2p^j}c^{\sigma_3p^k},a,z]=[a,b,z]^{-\sigma_2p^j}[a,c,z]^{-\sigma_3p^k}
$$
for every $x,y,z\in G$, where $\sigma_1,\sigma_2,\sigma_3\in\Z$.
Since $i<j,k$ and $n=4$, it follows from Remark \ref{rem: estrella} that $[x,y,a,z]=1$, so that all commutators of length $4$ with $a$ in the third entry are trivial.
In particular, $[c,b,a,b]$, $[c,b,a,a]$, and $[b,c,a,c]$ are trivial, and so, from (iv) and (v) of Lemma \ref{lem: GN mod gamma5}, it follows that all the possible combinations with $a,b$ and $c$ produce trivial commutators, as desired.
\end{proof}

\begin{lem}
\label{lem: i=j<k}
Let $G$ be as in (\ref{conditions}) with $p\neq 2$.
If $i=j<k$, then $\gamma_4(G)=1$.
\end{lem}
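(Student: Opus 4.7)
The plan is to adapt the argument of Lemma~\ref{lem: i<j}, with extra work to handle the fact that $b^{p^j}=b^{p^i}$ now has the same ``depth'' as $a^{p^i}$. By Lemma~2.1 of~\cite{guptanewman}, length-$4$ commutators in $\{a,b,c\}$ with only two distinct entries are trivial, so I may focus on those in which all three of $a,b,c$ appear.

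First I would repeat the third-position expansion from Lemma~\ref{lem: i<j}. Since $G$ is powerful with $p$ odd and $\gamma_5(G)=\gamma_4(G)^p=1$, the identity $[g^{p^n},h,\ldots]=[g,h,\ldots]^{p^n}$ applies without correction terms, so for $x,y,z\in G$ with $[x,y]=a^{\sigma_1 p^i}b^{\sigma_2 p^i}c^{\sigma_3 p^k}$ I still obtain
\[
[x,y,a,z]=[a,b,z]^{-\sigma_2 p^i}[a,c,z]^{-\sigma_3 p^k}.
\]
The second factor vanishes by Remark~\ref{rem: estrella} (applied to $a^{p^i}\in G'$, since $k\ge i+1$). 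In contrast to Lemma~\ref{lem: i<j}, however, the first factor $[a,b,z]^{\sigma_2 p^i}$ is not immediately trivial: Remark~\ref{rem: estrella} only yields $[a,b,z]^{p^{i+1}}=1$, which is one power of $p$ too weak.

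To close this $p$-gap I plan to exploit the $a\leftrightarrow b$ symmetry present when $i=j$: I would combine the above expansion with its analogs in which $b$ or $c$ (rather than $a$) sits in the third position, yielding parallel expressions involving $[a,b,z]^{p^i}$, $[a,c,z]^{p^i}$, and $[b,c,z]^{p^i}$. Fed into the Gupta--Newman relations (ii)--(v) of Lemma~\ref{lem: GN mod gamma5} (which hold absolutely since $\gamma_5(G)=1$), applied both to $(a,b,c)$ and under the swap $a\leftrightarrow b$, this collection of identities should provide enough linear constraints in the $\F_p$-vector space $\gamma_4(G)$ to force every remaining $p^i$-power obstruction to vanish.

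Once a single commutator such as $[c,b,a,a]$ is shown to be trivial, the finishing argument of Lemma~\ref{lem: i<j} carries over verbatim: relations (iv) and (v) of Lemma~\ref{lem: GN mod gamma5}, together with their images under the swap $a\leftrightarrow b$, chain-propagate triviality to all remaining length-$4$ commutators in $\{a,b,c\}$ and therefore give $\gamma_4(G)=1$. The main obstacle I expect is precisely closing the $p$-gap above; the symmetry of Lemma~\ref{lem: GN mod gamma5} in $a,b,c$ suggests the required extra identity is available, but confirming it will need careful book-keeping of the linear relations among the 4-fold commutators.
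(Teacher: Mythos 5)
You have correctly located the obstacle: when $i=j$ the third-position expansion leaves terms such as $[a,b,z]^{\sigma_2 p^i}$, which Remark \ref{rem: estrella} cannot kill, and the whole content of the lemma is in closing that gap. But your proposal does not close it; it only asserts that the Gupta--Newman relations applied to $(a,b,c)$ and to the swap $a\leftrightarrow b$ ``should provide enough linear constraints'' in $\gamma_4(G)$. That cannot work as stated: the relations of Lemma \ref{lem: GN mod gamma5} hold in \emph{every} $3$-Engel group, and free $3$-Engel groups of rank $3$ (of exponent $p$ on $\gamma_4$) have $\gamma_4\neq 1$, so no amount of $\F_p$-linear algebra with those relations alone can force the $p^i$-power obstructions to vanish. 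The powerful hypothesis has to enter in an essential way, and in the paper it does so through a case analysis on whether $p$ divides the structure constants $r$ and $s$ of $[a,b]=a^{rp^i}b^{sp^i}c^{tp^k}$: if $p\nmid r$ one extracts $[a,b,a]^{p^i}=1$ from $1=[a,b,b,a]=[a,b,a]^{rp^i}$; if $p\nmid s$ one gets $[b,a,a]^{p^i}=1$ from $1=[a,b,a,a]$; and if $p\mid r$ and $p\mid s$ one gets $[a,b,c,a]\in\gamma_4(G)^p=1$ directly. Each branch produces a \emph{different} auxiliary identity, which is then propagated by Lemma \ref{lem: GN mod gamma5}(iv),(v). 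This branching on divisibility is the missing idea in your sketch, and it is not recoverable from a uniform system of linear constraints.

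A second, smaller gap: your plan only discusses commutators with a repeated entry from $\{a,b\}$. The commutators with two entries equal to $c$ (namely $[c,a,b,c]$, $[c,b,a,c]$, $[a,b,c,c]$ and their rearrangements) need a separate argument, which in the paper is run first, conditionally on the one-$c$ case, and again requires its own case split on $p\mid r$ and $p\mid s$ using the identities $1=[a,b,b,c]=[a,b,c]^{rp^i}$ and $1=[a,b,a,c]=[b,a,c]^{sp^i}$. So the overall architecture of your proof is reasonable and close to the paper's, but the two steps that actually do the work are left unproved.
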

\begin{proof}
By \cite[Lemma 2.1]{guptanewman}, commutators of length $4$ in $\{a,b,c\}$ with only two different entries are all trivial.

Let us suppose first that all commutators of length $4$ in $\{a,b,c\}$ with exactly one entry in $c$ are trivial.
Then, by (iv) and (v) of Lemma \ref{lem: GN mod gamma5}, we only need to show that one of $[c,a,b,c]$, $[c,b,a,c]$ or $[a,b,c,c]$ is trivial.
Following the notation in (\ref{eq: notation}), we have
\begin{align}
    \label{eq: cabc}
    [c,a,b,c]&=[a,b,c]^{up^i}[b,c,c]^{-wp^k}=[a,b,c]^{up^i},\\
    \label{eq: cbac}
    [c,b,a,c]&=[b,a,c]^{\beta p^i}[a,c,c]^{-\gamma p^k}=[b,a,c]^{\beta p^i},\\
    \label{eq: abcc}
    [a,b,c,c]&=[a,c,c]^{rp^i}[b,c,c]^{sp^i},
\end{align}
where $[b,c,c]^{-wp^k}=[a,c,c]^{-\gamma p^k}=1$ by Remark \ref{rem: estrella} and $k>i$.
Note that, similarly, we have
\begin{align*}
    1&=[a,b,b,c]=[a,b,c]^{rp^i}[c,b,c]^{tp^k}=[a,b,c]^{rp^i},\\
    1&=[a,b,a,c]=[b,a,c]^{sp^i}[c,a,c]^{tp^k}=[b,a,c]^{sp^i}.
\end{align*}
Thus, if $p\doesnotdivide r$ or $p\doesnotdivide s$, then $[a,b,c]^{p^i}=1$ or $[b,a,c]^{p^i}=1$, and so (\ref{eq: cabc}) or (\ref{eq: cbac}) is trivial, respectively.
We hence may assume that $p$ divides both $r$ and $s$, but then (\ref{eq: abcc}) is trivial by Remark \ref{rem: estrella}, as desired.

Remove now the assumption that all commutators of length $4$ in $\{a,b,c\}$ with exactly one entry in $c$ are trivial.
By Lemma \ref{lem: first entry}, we only need to show that commutators of the form $[c,x,y,z]$, where $x,z,y\in\{a,b\}$, are trivial.
Also, we will only consider commutators of length $4$ with $2$ entries in $a$.
That commutators with $2$ entries in $b$ are also trivial will follow by a symmetric argument.
Note, by Remark \ref{rem: estrella}, that $[c,x,y]^{p^k}=1$ for every $x, y\in G$.
We will use this fact without special mention. Keeping also in mind the notation in (\ref{eq: notation}), we distinguish between the following different cases.

\vspace{0.2cm}
\noindent\underline{Case 1}: $p\doesnotdivide r$.
Then $1=[a,b,b,a]=[a,b,a]^{rp^i}$, and so $[a,b,a]^{p^i}=1$.
Now, $[c,b,a,a]=[b,a,a]^{\beta p^i}=1$, and thus $[c,a,b,a]=[c,a,a,b]=1$ by Lemma \ref{lem: GN mod gamma5}(iv) and (v).

\vspace{0.2cm}
\noindent\underline{Case 2}: $p\doesnotdivide s$.
Then $1=[a,b,a,a]=[b,a,a]^{sp^i}$, and so $[b,a,a]^{p^i}=1$.
Therefore, $[c,a,b,a]=[a,b,a]^{u p^i}=1$, and we conclude again by Lemma \ref{lem: GN mod gamma5} that $\gamma_4(G)=1$.

\vspace{0.2cm}
\noindent\underline{Case 3}: $p|r$, $p|s$.
Then, by Remark \ref{rem: estrella}, $[a,b,c,a]=[a,c,a]^{rp^i}[b,c,a]^{sp^j}\leq \gamma_4(G)^p=1$.
As a consequence, $[c,b,a,a]=[c,a,b,a]=[c,a,a,b]=1$ by Lemma \ref{lem: GN mod gamma5}(iv) and (v).
\end{proof}

The previous two lemmas immediately imply that, for $G$ as in the statements, if $G'$ can be generated by $2$ elements, then $\gamma_4(G)=1$.
Indeed, in such a case, there exist generators $a,b,c$ such that $G'=\langle a^{p^i},b^{p^j}\rangle$, and we may choose $k$ as big as we want.
The following theorem establishes the result for the general case (with $p\neq 2$) when $G'$ is $3$-generator, settling \ref{thm: non-metabelian}(i) and (iii).


\begin{thm}
\label{thm: r=3, p neq2}
Let $G$ be as in (\ref{conditions}) with $p\neq 2$.
Then $\gamma_4(G)=1$.
\end{thm}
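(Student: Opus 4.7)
The plan is to complete the case analysis begun in Lemmas \ref{lem: i<j} and \ref{lem: i=j<k}, which together dispose of the situations $i<j$ and $i=j<k$. Thus I would assume $i=j=k$; the structural relations become
$$
[a,b] = a^{rp^i} b^{sp^i} c^{tp^i},\quad [c,a] = a^{up^i} b^{vp^i} c^{wp^i},\quad [c,b] = a^{\alpha p^i} b^{\beta p^i} c^{\gamma p^i},
$$
and Remark \ref{rem: estrella}, applied with $n=4$ (using $\gamma_4(G)^p=\gamma_5(G)=1$), implies that every 3-commutator in $\{a,b,c\}$ satisfies $[x,y,z]^{p^{i+1}}=1$. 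Note also that $i\ge 1$, for otherwise $G=G'$ would force $G$ to be trivial. By \cite[Lemma 2.1]{guptanewman}, 4-commutators involving only two distinct generators already vanish, so only the 4-commutators in which all three of $a,b,c$ appear require attention.

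First I would expand each such 4-commutator by substituting the structural relations and applying the Hall-Petresco formula. Working modulo $\gamma_5(G)=1$, every ``cross term'' that appears in the expansion carries either a binomial coefficient of the form $\binom{mp^i}{2}$ or a product $mp^i\cdot np^i$; for $p$ odd and $i\ge 1$, both have $p$-adic valuation at least $i+1$, and so they vanish modulo $\gamma_4(G)^p=1$. After this simplification, each of the six key 4-commutators $[c,a,a,b]$, $[c,a,b,a]$, $[c,a,b,b]$, $[c,b,a,a]$, $[c,b,a,b]$, $[c,b,b,a]$ collapses to an expression involving only leading terms of the form $[x,y,z]^{mp^i}$ with $x,y,z\in\{a,b,c\}$.

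Next I would perform a case analysis on the divisibility by $p$ of the leading coefficients $(r,s)$, $(u,v)$, $(\alpha,\beta)$, mirroring the three-case structure of Lemma \ref{lem: i=j<k}. In each ``non-divisible'' subcase, triviality of commutators with only two distinct entries---such as $[a,b,b,a]=1$ by \cite[Lemma 2.1]{guptanewman}---together with the expansion of $[a,b]$ forces a suitable 3-commutator raised to $p^i$ to lie in $\gamma_4(G)$; combined with Lemma \ref{lem: GN mod gamma5}(iv)--(v), this yields the vanishing of all the key 4-commutators with first entry $c$. The symmetric argument applied to the pairs $(u,v)$ and $(\alpha,\beta)$ handles those with first entry $a$ or $b$, and Lemma \ref{lem: first entry} then closes out $\gamma_4(G)=1$.

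I anticipate the main obstacle to be the ``fully divisible'' subcase $p\mid r,s,u,v,\alpha,\beta$: unlike in Lemma \ref{lem: i=j<k}, where the strict inequality $k>i$ supplied a free factor of $p$, here every cancellation has to be extracted from $\gamma_4(G)^p=1$, the 3-Engel identities, and Remark \ref{rem: estrella}. Each Hall-Petresco correction must be tracked modulo $p^{i+1}$, and verifying that no hidden non-trivial contribution survives in $\gamma_4(G)/\gamma_4(G)^p$ will form the bulk of the computation.
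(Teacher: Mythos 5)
Your setup is the same as the paper's: reduce to $i=j=k$ via Lemmas \ref{lem: i<j} and \ref{lem: i=j<k}, expand $4$-commutators linearly modulo $\gamma_5(G)=1$ into products of the form $[x,y,z]^{mp^i}$, and then run a divisibility case analysis to show one of $[c,b,a,a]$, $[c,a,b,a]$, $[c,a,a,b]$ vanishes so that Lemma \ref{lem: GN mod gamma5}(iv)--(v) finishes. But the proposal stops exactly where the proof begins: the case analysis, which is the entire content of the theorem, is not carried out, and you explicitly leave the hardest case open (``verifying that no hidden non-trivial contribution survives \dots will form the bulk of the computation''). That is a genuine gap, not a deferral of routine checking, because in the case $i=j=k$ there is no spare factor of $p$ coming from $k>i$, and the three-case structure of Lemma \ref{lem: i=j<k} does not transfer: the paper needs eight cases (keyed to divisibility of $u,v,w$, the coefficients of $[c,a]$, rather than your pairs $(r,s),(u,v),(\alpha,\beta)$), including a genuinely nonobvious computation in the case $p\nmid u$, $p\nmid v$, where the relations $1=[c,a,a,a]$ and $1=[c,a,c,a]$ are combined to force $[c,a,b,a]^v=1$.

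Two specific ideas that your plan is missing and that the paper relies on. First, the ``fully divisible'' subcase is not resolved by tracking Hall--Petresco corrections modulo $p^{i+1}$; it is resolved structurally: if $p$ divides $r$, $s$ and $t$, then $[a,b]\in (G')^p$, so $G'$ is $2$-generated, and one re-chooses generators with $k$ arbitrarily large to fall back on Lemmas \ref{lem: i<j} and \ref{lem: i=j<k}. Without this reduction (or a substitute) your scheme has no way to close that case. Second, Lemma \ref{lem: GN mod gamma5}(iv) reads $[c,a,a,b]^4[c,b,a,a]^6\equiv 1$, so for $p=3$ knowing $[c,a,a,b]=1$ does \emph{not} yield $[c,b,a,a]=1$; the paper has to treat $p=3$ separately in two subcases, using the extra relation $[c,b,c,a]=1$ that (iv) provides only when $p=3$. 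Your proposal does not distinguish $p=3$ from other odd primes, so the step ``combined with Lemma \ref{lem: GN mod gamma5}(iv)--(v), this yields the vanishing of all the key $4$-commutators'' would fail there as stated.
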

\begin{proof}
By Lemmas \ref{lem: i<j} and \ref{lem: i=j<k} we may assume that $i=j=k$.
We will show that all commutators of length $4$ in $\{a,b,c\}$ are trivial.
As before, commutators with only two different entries are all trivial, so we only consider commutators with $3$ different entries.
We claim that commutators of length $4$ with exactly two entries of $a$ are trivial.
Then, a completely symmetric argument can show that commutators with two entries in $b$ or $c$ are also trivial, so we will be done.
Note that by Lemma \ref{lem: first entry} it suffices to consider commutators where $c$ is in the first entry.
As a matter of fact, since $p\neq 2$, we will just show that one of $[c,b,a,a]$ or $[c,a,b,a]$ is trivial, and then conclude from Lemma \ref{lem: GN mod gamma5}(iv) and (v) that $\gamma_4(G)=1$.
We follow the notation in (\ref{eq: notation}) and distinguish between several cases depending on whether $p$ divides $u, v$ or $w$.


\vspace{0.2cm}
\noindent\underline{Case 1}: $p|w$.
Then we have $1=[c,a,a,a]=[b,a,a]^{vp^i}$, $[c,a,b,a]=[a,b,a]^{up^i}$ and $[c,a,a,b]=[b,a,b]^{vp^i}$ by Remark \ref{rem: estrella}.
We consider further subcases.

\vspace{0.2cm}
\underline{Case 1.1}: $p|u$.
Then we have $[c,a,b,a]=1$.

\vspace{0.2cm}
\underline{Case 1.2}: $p\doesnotdivide v$.
Then $[b,a,a]^{p^i}=1$, so $[c,a,b,a]=1$.

\vspace{0.2cm}
\underline{Case 1.3}: $p\doesnotdivide u$, $p|v$.
In this case $[c,a,a,b]=1$, so if $p\neq 3$ we finish the argument as before by Lemma \ref{lem: GN mod gamma5}(iv).
Assume then $p=3$.
Note that $1=[c,a,c,a]=[a,c,a]^{u p^i}$, and since $p\doesnotdivide u$ we obtain $[a,c,a]^{p^i}=1$.
If $p|\beta$, then $[c,b,a,a]=[c,a,a]^{\gamma p^i}=1$ and we are done.
If $p\doesnotdivide \beta$, then observe from Lemma \ref{lem: GN mod gamma5}(iv) that $1=[c,b,c,a]=[b,c,a]^{\beta p^i}$, so $[a,b,c,a]=[b,c,a]^{sp^i}=1$.
Lemma \ref{lem: GN mod gamma5}(v) gives now $[c,b,a,a]=1$, as desired.

\vspace{0.3cm}
\noindent\underline{Case 2}: $p\doesnotdivide w$.

\vspace{0.2cm}
\underline{Case 2.1}: $p|u$, $p\doesnotdivide v$.
Then $1=[c,a,c,a]=[b,c,a]^{vp^i}$, so $[b,c,a]^{p^i}=1$.
Hence, $[c,a,b,a]=[a,b,a]^{up^i}[c,b,a]^{wp^i}=1$.

\vspace{0.2cm}
\underline{Case 2.2}: $p\doesnotdivide u$, $p|v$.
Observe that $1=[c,a,c,a]=[a,c,a]^{up^i}$ which implies that $[a,c,a]^{p^i}=1$.
Now, if $p\doesnotdivide s$, then $1=[a,b,a,a]=[b,a,a]^{sp^i}$ implies $[b,a,a]^{p^i}=1$.
Thus, $[c,b,a,a]=[b,a,a]^{\beta p^i}[c,a,a]^{\gamma p^i}=1$, and we are done.
If $p|s$, then $[a,b,c,a]=[a,c,a]^{rp^i}[b,c,a]^{sp^i}=1$, and we conclude from Lemma \ref{lem: GN mod gamma5}(v) that $\gamma_4(G)=1$.

\vspace{0.2cm}
\underline{Case 2.3}: $p|u$, $p|v$, $p\neq 3$.
Then $1=[c,a,a,a]=[c,a,a]^{wp^i}$, so $[c,a,a]^{p^i}=1$.
Therefore $1=[a,b,a,a]=[b,a,a]^{sp^i}[c,a,a]^{tp^i}=[b,a,a]^{sp^i}$.

Now, if $p\doesnotdivide s$, then $[b,a,a]^{p^i}=1$, so $[c,b,a,a]=[b,a,a]^{\beta p^i}[c,a,a]^{\gamma p^i}=1$ and we are done.

If $p|s$ but $p\doesnotdivide t$, then $1=[a,b,a,b]=[b,a,b]^{sp^i}[c,a,b]^{tp^i}=[c,a,b]^{tp^i}$, so $[c,a,b]^{p^i}=1$.
Thus, $[c,a,a,b]=[c,a,b]^{wp^i}=1$, and since $p\neq 3$, we finish by Lemma \ref{lem: GN mod gamma5}(iv). 

If $p|s$, $p|t$ but $p\doesnotdivide r$, then $1=[a,b,b,a]=[a,b,a]^{rp^i}$, so $[a,b,a]^{p^i}=1$, and therefore, $[c,b,a,a]=[b,a,a]^{\beta p^i}[c,a,a]^{\gamma p^i}=1$.

Finally, if $p|s$, $p|t$ and $p|r$, then it follows that $[a,b]\in (G')^p$, and so $G'$ can be generated by two elements.
The result follows then from the discussion after Lemma \ref{lem: i=j<k}.

\vspace{0.2cm}
\underline{Case 2.4}: $p|u$, $p|v$, $p=3$.
As in the previous case, we have $[c,a,a]^{p^i}=1$.
Now, since $p=3$, Lemma \ref{lem: GN mod gamma5}(iv) yields $1=[c,b,c,a]=[a,c,a]^{\alpha p^i}[b,c,a]^{\beta p^i}=[b,c,a]^{\beta p^i}$.

If $p\doesnotdivide \beta$, then $[b,c,a]^{p^i}=1$, and so $[c,a,b,a]=[a,b,a]^{up^i}[c,b,a]^{wp^i}=1$.
If $p|\beta$, then $[c,b,a,a]=[b,a,a]^{\beta p^i}[c,a,a]^{\gamma p^i}=1$.

\vspace{0.2cm}
\underline{Case 2.5}: $p\doesnotdivide u$, $p\doesnotdivide v$.
Then $1=[c,a,a,a]=[b,a,a]^{vp^i}[c,a,a]^{wp^i}$, so that $[a,b,a]^{vp^i}=[a,c,a]^{-wp^i}$.
Similarly, $1=[c,a,c,a]=[a,c,a]^{up^i}[b,c,a]^{vp^i}$, so that $[c,b,a]^{vp^i}=[a,c,a]^{up^i}$.
Now,
$$
[c,a,b,a]^v=[a,b,a]^{vup^i}[c,b,a]^{vwp^i}=[a,c,a]^{-uwp^i}[a,c,a]^{uwp^i}=1,
$$
and we finish the proof.
\end{proof}

The bound for the nilpotency class of $G$ in Theorem \ref{thm: r=3, p neq2} is clearly best possible, since every nilpotent group of class $3$ is $3$-Engel.
We conclude this section by proving Theorem \ref{thm: non-metabelian}(iv).

\begin{thm}
\label{thm: r=3, p=2}
Let $G$ be as in (\ref{conditions}) with $p=2$.
Then $\gamma_5(G)=1$.
\end{thm}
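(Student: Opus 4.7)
The plan is to mirror the proof of Theorem \ref{thm: r=3, p neq2} one level up: instead of killing length-$4$ commutators, we will kill length-$5$ ones. Since $\gamma_6(G)=1$ we have $\gamma_5(G)\subseteq Z(G)$, and $\gamma_5(G)^2=1$ forces every length-$5$ commutator to have order at most $2$. Using Lemma \ref{lem: first entry} (which is effective because $\gamma_6(G)=1$), I would fix the first entry of every length-$5$ commutator in the generators. The $3$-Engel law $[u,v,v,v]=1$ together with parts (i) and (ii) of Lemma \ref{lem: GN mod gamma6} (applied in each of the three cyclic roles of $a,b,c$) kills every length-$5$ commutator in $\{a,b,c\}$ whose multiset of entries has type $(3,1,1)$, $(4,1)$ or $(5)$. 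For the remaining type $(2,2,1)$, say with $c$ first and $a,a,b,b$ in the tail, parts (iii), (v), (vi) of Lemma \ref{lem: GN mod gamma6} together with $\gamma_5(G)^2=1$ collapse the six permutations $w_1,\dots,w_6$ to $w_3=w_4=1$ and $w_1=w_2=w_5=w_6$. Hence the theorem reduces to establishing
\begin{equation*}
 [c,a,a,b,b]\;=\;[a,b,b,c,c]\;=\;[b,c,c,a,a]\;=\;1.
\end{equation*}

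The next step is to reduce to the case $i=j=k$, in direct analogy with Lemmas \ref{lem: i<j} and \ref{lem: i=j<k}. The key tool is Remark \ref{rem: estrella} applied with $n=5$: since $\gamma_5(G)^2=\gamma_6(G)=1$, any $x\in G$ with $x^{2^i}\in G'$ satisfies $[x,y_1,y_2,y_3]^{2^{i+1}}=1$ for all $y_1,y_2,y_3\in G$. Expanding each target commutator by substituting the right-hand sides of (\ref{eq: notation}) into the innermost length-$2$ commutator, and using both the centrality of $\gamma_5(G)$ and its exponent-$2$ property to drop higher correction terms, rewrites the expression as a product of length-$4$ commutators of $\{a,b,c\}$ raised to certain $2$-powers determined by $i,j,k$. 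When $i$ is strictly smaller than $j$ or $k$, these exponents exceed the $2^{i+1}$-threshold of Remark \ref{rem: estrella}, and the whole product vanishes.

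The bulk of the work is the case $i=j=k$, which I would handle by a divisibility case analysis on the nine parameters $r,s,t,u,v,w,\alpha,\beta,\gamma$ of (\ref{eq: notation}), parallel to the subcase structure of Theorem \ref{thm: r=3, p neq2}. In each case, one expands one of the three target commutators, applies Remark \ref{rem: estrella}, and concludes using Lemma \ref{lem: GN mod gamma6}. The hard part will be the book-keeping: the Gupta--Newman identities in Lemmas \ref{lem: GN mod gamma5} and \ref{lem: GN mod gamma6} carry many even coefficients that become vacuous at $p=2$, so several simplifications used at odd primes must be replaced by more refined arguments, producing extra subcases. A useful fall-back, analogous to the discussion after Lemma \ref{lem: i=j<k}, is that whenever one of the triples $\{r,s,t\}$, $\{u,v,w\}$, $\{\alpha,\beta,\gamma\}$ consists entirely of even integers, the corresponding element of $\{[a,b],[c,a],[c,b]\}$ lies in $(G')^2$, so $G'$ becomes $2$-generated at a deeper level and the problem reduces to a rank-$2$ situation handled by the arguments in Theorem \ref{thm: derived cyclic}. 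The genuinely delicate subcases are the generic ones in which each triple contains an odd parameter; there one has to combine the identities provided by Remark \ref{rem: estrella} with the surviving relation $w_1=w_2=w_5=w_6$ in order to force $[c,a,a,b,b]=1$ directly.
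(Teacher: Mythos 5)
Your first paragraph is sound and matches the paper's reduction: using Lemma \ref{lem: first entry}, Lemma \ref{lem: GN mod gamma6}(i)--(ii) and the $3$-Engel law to dispose of all entry-types except $(2,2,1)$, and then using parts (iii), (v), (vi) of Lemma \ref{lem: GN mod gamma6} with $\gamma_5(G)^2=1$ to reduce everything to $[c,a,a,b,b]=1$ (and its symmetric versions). But from that point on the proposal is a plan rather than a proof, and the plan has real problems. The decisive step --- actually proving $[c,a,a,b,b]=1$ --- is deferred to a nine-parameter divisibility analysis that you do not carry out, and you explicitly concede that the ``genuinely delicate subcases'' (each triple containing an odd parameter) remain open. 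That is precisely where the content of the theorem lives, so as it stands there is a genuine gap. Two of your auxiliary devices are also shaky: (a) the fall-back to Theorem \ref{thm: derived cyclic} when one of the triples is all even is invalid, since that theorem requires $G'$ \emph{cyclic}, not $2$-generated, and no $p=2$, weight-$5$ analogue of Lemmas \ref{lem: i<j} and \ref{lem: i=j<k} is available; (b) the claimed reduction to $i=j=k$ does not work as stated, because expanding, say, $[c,a,a,b,b]$ produces $[c,a,b,b]^{w2^k}$, whose Remark \ref{rem: estrella} threshold is $2^{k+1}$ --- the exponent never exceeds the threshold unless $w$ is even, no matter how $i,j,k$ compare. (One can rescue a partial reduction by first rewriting $[c,a,b,b]$ as $[a,c,b,b]^{-1}$ modulo the central subgroup $\gamma_5(G)$ to lower the threshold to $2^{i+1}$, but you do not say this, and it still leaves the case $i=j=k$ with $w$ odd untouched.)

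The missing idea is that no heavy case analysis is needed. The paper's proof splits only on the parity of the single parameter $w$ from $[c,a]=a^{u2^i}b^{v2^j}c^{w2^k}$. One has $[c,a,a,b,b]=[c,a,b,b]^{w2^k}$ (the $[b,a,b,b]^{v2^j}$ term dies by the $3$-Engel law), so if $w$ is even this is trivial by Remark \ref{rem: estrella} and all the $w_i$ vanish via Lemma \ref{lem: GN mod gamma6}(iii),(v). If $w$ is odd, one instead expands the identity $1=[c,a,a,b,a]$ from Lemma \ref{lem: GN mod gamma6}(ii) to get $1=[b,a,b,a]^{v2^j}[c,a,b,a]^{w2^k}$; since $[b,a,b,a]^2=1$ in a $3$-Engel group and $j\ge 1$, this forces $[c,a,b,a]^{2^k}=1$, whence $[c,b,a,b,a]=[c,a,b,a]^{\gamma 2^k}=1$, i.e.\ $w_5=1$, and Lemma \ref{lem: GN mod gamma6} again kills all the $w_i$. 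If you want to salvage your approach, you should either reproduce an argument of this kind or actually exhibit the case analysis you promise; in its current form the proposal establishes only the (correct) reduction, not the theorem.
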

\begin{proof}
From Lemma \ref{lem: GN mod gamma6}, every commutator of length $5$ such that $3$ of its entries are equal is trivial.
Also, from Lemma~\ref{lem: first entry}, we may only consider commutators whose non repeated entry is in the first position.
We claim that all commutators of length $5$ with $c$ in the first entry and $2$ entries of $a$ and $b$ are trivial.
Then, a completely symmetric argument will complete the proof.

Following the notation in (\ref{eq: notation}), we have $[c,a,a,b,b]=[c,a,b,b]^{w 2^k}$, so if $p| w$, then $[c,a,a,b,b]=1$ by Remark \ref{rem: estrella}, and the claim follows from Lemma \ref{lem: GN mod gamma6}(iii) and (v).

If $p\doesnotdivide  w$, note that $1=[c,a,a,b,a]=[b,a,b,a]^{v 2^j}[c,a,b,a]^{w 2^k}$ and that $[b,a,b,a]^2=1$ by \cite[Theorem 2.1]{guptanewman}.
Hence $[c,a,b,a]^{2^k}=1$, so that $[c,b,a,b,a]=[c,a,b,a]^{\gamma 2^{k}}=1$,
and the result follows again by Lemma \ref{lem: GN mod gamma6}.
\end{proof}

Note that the upper bound in Theorem \ref{thm: r=3, p=2} is best possible by Example \ref{ex: p=2,r=3,powerful}.

\section{Powerful \texorpdfstring{$3$}{3}-Engel groups of rank greater than \texorpdfstring{$3$}{3} and the proof of Theorem \ref{thm: non-metabelian}}

\label{sec: powerful rank r}

The following theorem due to Gupta and Newman, together with Examples \ref{ex: p=3} and \ref{ex: p>3} after it, establish Theorem \ref{thm: non-metabelian}(ii).

\begin{thm}[Gupta and Newman \mbox{\cite[Theorem 4.4]{guptanewman}}]
\label{thm: gamma5^20=1}
Let $G$ be a $3$-Engel group.
Then, $\gamma_5(G)^{20}=1$.
\end{thm}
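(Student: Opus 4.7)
My plan is a two-stage reduction that turns the statement $\gamma_5(G)^{20}=1$ into two separate exponent bounds: one on $\gamma_5(G)/\gamma_6(G)$ and one on $\gamma_6(G)$ itself.

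In the first stage I would show $\gamma_5(G)^{10}\leq\gamma_6(G)$. By Lemma \ref{lem: GN mod gamma6}(iii) and (v), each of the six basic commutators $w_1,\ldots,w_6$ is congruent to a power of $w_1=[c,a,a,b,b]$ modulo $\gamma_6(G)$; by (i), (ii) and (iv), weight-$5$ commutators with a triply repeated entry or of the pattern $[b,a,a,b,c]$ vanish mod $\gamma_6(G)$. Together with the first-entry lemma (Lemma \ref{lem: first entry}), this reduces every basic weight-$5$ commutator in three elements to a power of $w_1$ modulo $\gamma_6(G)$. A standard multilinear expansion, substituting each entry by a product $xy$ and expanding via commutator identities, propagates these relations to basic commutators with four or five distinct entries. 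Then Lemma \ref{lem: GN mod gamma6}(vi) gives $w_1^{10}\equiv 1\pmod{\gamma_6(G)}$, so every generator of $\gamma_5(G)/\gamma_6(G)$ has order dividing $10$ and hence $\gamma_5(G)^{10}\leq\gamma_6(G)$.

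In the second stage I would attempt to show that $\gamma_6(G)$ has exponent dividing $2$. Granting this, for every $x\in\gamma_5(G)$ we have $x^{10}\in\gamma_6(G)$ and therefore $x^{20}=(x^{10})^2=1$; since $\gamma_5(G)^{20}$ is generated by such $20$-th powers, the theorem would follow. To prove $\gamma_6(G)^2=1$ I would iterate the Gupta--Newman calculus at weight $6$: linearize the weight-$5$ identities of Lemma \ref{lem: GN mod gamma6} by replacing each entry with a product $xy$ in turn, collect the resulting weight-$6$ identities modulo $\gamma_7(G)$, and combine them with repeated Hall--Witt, Jacobi, and $3$-Engel expansions. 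Since finitely generated $3$-Engel groups are nilpotent by Heineken's local nilpotence theorem, one can reduce the problem to a nilpotent quotient and induct on the class to propagate the bound through the lower central series.

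The hard part will be Stage $2$. The weight-$5$ bookkeeping in Lemma \ref{lem: GN mod gamma6} is already delicate, and at weight $6$ there are substantially more basic commutator types to control while the available identities give only partial information on each of them. I expect the bulk of the work to concentrate there, and I suspect the final constant $20$ (rather than $10$) reflects precisely the $2$-torsion contribution coming from $\gamma_6(G)$ that is not already killed by the weight-$5$ identity $w_1^{10}\equiv 1$.
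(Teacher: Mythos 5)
This statement is not proved in the paper at all: it is imported verbatim from Gupta and Newman \cite[Theorem 4.4]{guptanewman}, so there is no internal argument to set yours against; I can only judge the proposal on its own terms, and there it has a genuine gap whose central claim is in fact false. Your factorization $20=10\cdot 2$ along the lower central series requires $\gamma_6(G)^2=1$ for \emph{every} $3$-Engel group $G$, and this fails. The nilpotency class of finite $3$-Engel groups genuinely grows with the number of generators (as the introduction of the paper recalls); since the class is at most $4$ for $p$-groups with $p\neq 2,5$, and since a $3$-Engel group without elements of order $5$ has class at most $5$ (Traustason), the unbounded examples are $5$-groups. For a $3$-Engel $5$-group of class at least $6$, the subgroup $\gamma_6(G)$ is a nontrivial $5$-group and cannot have exponent $2$. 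So Stage 2, which you yourself flag as ``the hard part'' and for which you offer only a programme (``iterate the calculus at weight $6$, combine with Hall--Witt and induct on the class''), is not merely unfinished: it is aimed at a false target, and no amount of weight-$6$ bookkeeping will establish it.

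Stage 1 is also not secured as written. Lemma \ref{lem: GN mod gamma6}(vi) gives $[c,a,a,b,b]^{10}\equiv 1\pmod{\gamma_6(G)}$ only for commutators of multidegree $(1,2,2)$ in three elements, whereas $\gamma_5(G)/\gamma_6(G)$ is also generated by basic commutators with four or five distinct entries. Linearizing $w_1^{10}\equiv 1$ by substituting $a\mapsto a_1a_2$ only shows that certain \emph{products} of mixed commutators have trivial tenth power, not that each individual mixed commutator does; the fully multilinear commutators $[a,b,c,d,e]$ obey quite different relations (compare the Hall--Witt manipulations in the proof of Theorem \ref{thm: r=4,5}). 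So even the containment $\gamma_5(G)^{10}\leq\gamma_6(G)$ is not established by what you wrote. The actual Gupta--Newman argument does not proceed by a two-stage exponent splitting across $\gamma_6$; it is a direct and lengthy commutator computation in the relatively free $3$-Engel group in which the exponent $20$ emerges from combining separate $4$-torsion and $5$-torsion analyses, and any self-contained proof should be expected to involve work of that nature rather than a reduction to a clean exponent statement about $\gamma_6(G)$.
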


\begin{example}
\label{ex: p=3}
Let $p=3$ and let $G=\langle a,b,c,d\rangle$ be a group with the following presentation:

\ 

\begin{center}
    \begin{tabular}{lll}
         $[b,c]=b^{2p^2}$, & $[b,d]=a^{p}$, & $[c,d]=d^{p^2}$,\\
         $[a,b]=1$, & $[a,c]=a^{p^2}c^{-p^4}$, & $[a,d]=c^{p^2}$,
    \end{tabular}
\end{center}
\begin{center}
    \begin{tabular}{llll}
         $a^{p^5}=1$, & $b^{p^6}=1$ & $c^{p^6}=1$, & $d^{p^4}=1$. 
    \end{tabular}
\end{center}

\ 

\noindent Then $G$ is a powerful $3$-Engel $3$-group of nilpotency class $4$.
\end{example}

\begin{example}
\label{ex: p>3}
Let $p>3$ and let $G=\langle a,b,c,d\rangle$ be a group with the following presentation:

\ 

\begin{center}
    \begin{tabular}{lll}
        $[b,c]=c^{p^4}$, & $[b,d]=b^{p^3}$, & $[c,d]=a^p$,\\
        $[a,b]=a^{-p^4}c^{p^6}$, & $[a,c]=1$, & $[a,d]=a^{\alpha p^3}c^{\gamma p^5}$,,
    \end{tabular}
\end{center}
\begin{center}
    \begin{tabular}{lllll}
        $c^{p^9}=a^{-3p^7}$, & $a^{p^9}=1$, & $b^{p^9}=1$, & $c^{p^{11}}=1$, & $d^{p^8}=1$,
    \end{tabular}
\end{center}

\noindent with $\alpha=-5r$, $\gamma=-11s$, where $r$ and $s$ are taken such that $3r\equiv 1\pmod{p^9}$ and $9s\equiv 1\pmod{p^{11}}$.
Then $G$ is a powerful $3$-Engel $p$-group of nilpotency class $4$.
\end{example}

Note that Theorem \ref{thm: gamma5^20=1} shows that $\gamma_5(G)=1$ if $p\not\in\{2,5\}$.
The following result shows that the same is true even if the prime is $2$ or $5$, as long as the rank of the group is small.
This proves Theorem \ref{thm: non-metabelian}(v).

\begin{thm}
\label{thm: r=4,5}
Let $G$ be a powerful $3$-Engel $p$-group of rank at most $5$.
Then $\gamma_5(G)=1$.
\end{thm}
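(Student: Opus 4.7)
By Theorem \ref{thm: gamma5^20=1}, every 3-Engel group satisfies $\gamma_5(G)^{20}=1$, so the conclusion is immediate whenever $p\notin\{2,5\}$. We therefore assume $p\in\{2,5\}$: in a $p$-group the relation becomes $\gamma_5(G)^5=1$ for $p=5$ and $\gamma_5(G)^4=1$ for $p=2$, and the powerful inequality $\gamma_{5+l}(G)\le\gamma_5(G)^{p^l}$ then yields $\gamma_6(G)=1$ when $p=5$, and $\gamma_7(G)=1$ (with $\gamma_6(G)\le\gamma_5(G)^2$) when $p=2$. In either case we carry out the argument modulo $\gamma_6(G)$, with the understanding that for $p=2$ the residual ambiguity lives in $\gamma_5(G)^2$ and is absorbed at the end via $\gamma_5(G)^4=1$.

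Fix a generating set $x_1,\ldots,x_r$ of $G$ ($r\le 5$) with $G'=\langle x_1^{p^{n_1}},\ldots,x_r^{p^{n_r}}\rangle$, and for each pair $i<j$ record the expansion of $[x_j,x_i]$ in this basis of $G'$. Since every length-$5$ commutator is $\Z$-linear in each slot modulo $\gamma_6(G)$, it suffices to prove $[x_{i_1},\ldots,x_{i_5}]=1$ for every $5$-tuple of generators. By Lemma \ref{lem: GN mod gamma6}(i),(ii),(iv), every such commutator in which some generator is repeated at least three times, and every commutator of shape $[b,a,a,b,c]$, already vanishes. Combined with Lemma \ref{lem: first entry} and the symmetries in Lemma \ref{lem: GN mod gamma6}(iii),(v), the outstanding cases reduce to the entry-multiplicity patterns $(2,2,1)$, $(2,1,1,1)$ and $(1,1,1,1,1)$, each with essentially one basic representative (up to integer powers), namely $[c,a,a,b,b]$, $[c,a,a,b,d]$ and $[a,b,c,d,e]$.

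I would kill these three commutators in turn, following the template of the rank-$3$ proof of Theorem \ref{thm: r=3, p=2}. For pattern $(2,2,1)$, expanding $[c,a]$ in the basis of $G'$ gives
\[
[c,a,a,b,b]=\prod_{s=1}^{r}[x_s,a,b,b]^{p^{n_s}e_s},
\]
and Remark \ref{rem: estrella} (applied with $n=5$) yields $[x_s,a,b,b]^{p^{n_s+1}}=1$ for each $s$; a case analysis on the $p$-divisibility of the exponents $e_s$---paralleling the rank-$3$ argument and playing off the triviality of auxiliary commutators with a triple entry such as $[c,a,a,b,a]$---forces $[c,a,a,b,b]=1$. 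For patterns $(2,1,1,1)$ and $(1,1,1,1,1)$, the plan is to run the analogous but more elaborate direct expansion for $[c,a,a,b,d]$ and $[a,b,c,d,e]$: expand the leading bracket in the basis of $G'$, carry out the $p$-divisibility case analysis on the resulting exponents, and exploit the vanishing of auxiliary commutators that are of shape $[b,a,a,b,c]$, have a triple entry, or fall under the $(2,2,1)$ pattern already handled.

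\textbf{Main obstacle.} The principal difficulty is the proliferation of subcases arising from the $p$-divisibility of the structure constants in the expansions of $[c,a],[c,b],[c,d],[c,e]$ in the basis of $G'$; in the $(1,1,1,1,1)$ pattern as many as five such expansions interact simultaneously. For $p=2$ this is compounded by the fact that $\gamma_6(G)$ is only contained in $\gamma_5(G)^2$, so the working modulus is tighter and the residual square terms have to be cleared at the end via $\gamma_5(G)^4=1$. The rank bound $r\le 5$ is precisely what keeps this combinatorial bookkeeping finite; by Theorem \ref{thm: non-metabelian}(vi) the class-$4$ conclusion fails once $r\ge 14$, so the method must break down somewhere in between.
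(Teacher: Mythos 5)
Your reduction to $p\in\{2,5\}$ and your catalogue of surviving entry-patterns are fine, but the core of your plan --- expanding $[c,a]$ in a basis of $G'$ and running a $p$-divisibility case analysis on the structure constants, as in the rank-$3$ section --- misses the one idea that makes the rank hypothesis bite. In the rank-$3$ argument the expansion of $[c,a,a,b,b]$ collapses because the cross-terms $[a,a,b,b]$ and $[b,a,b,b]$ are two-letter commutators killed by Gupta--Newman; in rank $4$ or $5$ your expansion produces terms such as $[d,a,b,b]^{p^{n_d}e_d}$ and $[e,a,b,b]^{p^{n_e}e_e}$ involving the remaining generators, and these are genuinely three-letter weight-$4$ commutators that no two-letter identity controls, so the rank-$3$ template does not close up. More tellingly, your only quantitative use of $r\le 5$ is that each $[x_j,x_i]$ has at most five structure constants; that would keep the bookkeeping finite for \emph{any} fixed $r$, yet the conclusion is known to fail for $r\ge 14$ by Theorem \ref{thm: non-metabelian}(vi) and is open for $6\le r\le 13$. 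You concede that ``the method must break down somewhere in between'' without being able to say where --- which is the symptom of the gap.

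What the paper actually does with the rank is a pigeonhole, or linear-dependence, argument: the six commutators $[c,a,a]$, $[c,b,b]$, $[c,a,b]$, $[c,b,a]$, $[a,b,b]$, $[b,a,a]$ generate a subgroup of rank at most $5$ (subgroups of a powerful group of rank $r$ have rank at most $r$), and since $\gamma_3(G)^p\le Z_2(G)$ they satisfy a nontrivial $\mathbb{F}_p$-linear dependence modulo $Z_2(G)$; a case analysis on which coefficient is nonzero, fed through Lemma \ref{lem: GN mod gamma6} and Lemma \ref{lem: GN mod gamma5}(iii), forces $[c,a,a,b,b]=1$. Here ``$6>5$'' is exactly where $r\le 5$ enters. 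The remaining patterns are then dispatched not by further expansions but by linearization plus the Hall--Witt identity (giving $[a,b,c,d,d]^3=1$, hence triviality since $p\in\{2,5\}$), and, for $p=2$, the three-subgroup lemma together with a second dependence argument of the same kind applied to $[a,b,b,c,d]$. Without some substitute for this dependence argument your divisibility cases will not terminate, so as it stands the proposal has a genuine gap rather than being an alternative proof.
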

\begin{proof}
The result follows immediately from Theorem \ref{thm: gamma5^20=1} if $p\neq 2,5$, so let $p\in\{2,5\}$.
We assume $\gamma_6(G)=\gamma_5(G)^p=1$, and we first claim that $[c,a,a,b,b]=1$ for every $a,b,c\in G$.
Fix thus $a,b,c\in G$ and consider the subgroup
$$
H=\langle [c,a,a],[c,b,b],[c,a,b],[c,b,a],[a,b,b],[b,a,a]\rangle.
$$
Since $G$ is powerful of rank $5$, the rank of $H$ is at most $5$, and since $\gamma_3(G)^p\le Z_2(G)$, there exist $\alpha,\beta,\gamma,\delta,\epsilon,\zeta\in\mathbb{F}_p$ not all equal to zero such that
$$
[c,a,a]^{\alpha}[c,b,b]^{\beta}[c,a,b]^{\gamma}[c,b,a]^{\delta}[b,a,a]^{\epsilon}[a,b,b]^{\zeta}\equiv 1\pmod{Z_2(G)}.
$$
If $\alpha\neq 0$, then $[c,a,a]$ can be written as a product of the rest of commutators modulo $Z_2(G)$, and since commutators of length $5$ with three repeated entries are trivial by Lemma \ref{lem: GN mod gamma6}(i) and (ii), it follows that $[c,a,a,b,b]=1$.
We may assume then that $\alpha=0$, and by Lemma \ref{lem: GN mod gamma6}(iii), we can repeat the same argument with $[c,b,b]$ and assume $\beta=0$.

Now, if $p=2$, then Lemma \ref{lem: GN mod gamma6}(v) yields $[c,b,a,a,b]=1$, and so, if $\gamma\neq 0$, it then follows that $[c,a,b,a,b]=1$.
Thus, Lemma \ref{lem: GN mod gamma6}(v) gives $[c,a,a,b,b]=1$, so we may assume $\gamma=0$.
A symmetric argument shows that we may also assume that $\delta=0$.
Finally, if $\epsilon\neq 0$, then Lemma \ref{lem: GN mod gamma5}(iii) yields
$$
[c,a,a,b,b]=[b,c,a,a,b][b,a,a,c,b]=[c,b,a,a,b]=1,
$$
and similarly if $\zeta\neq0$.

If $p=5$, then $[c,a,b,b,a]=[c,a,a,b,b]$ and $[c,a,b,a,b]=[c,a,a,b,b]^3$ by Lemma \ref{lem: GN mod gamma6}(iii) and (v).
Then, if $\gamma\neq 0$, we must have, on the one hand,
$$
[c,a,a,b,b]^3=[c,a,b,a,b]=[c,b,a,a,b]^{-\delta\gamma^{-1}}=[c,a,a,b,b]^{-\delta\gamma^{-1}},
$$
so that $-\delta\gamma^{-1}=3$.
On the other hand,
$$
[c,b,a,b,a]=[c,a,b,a,b]=[c,b,a,a,b]^{-\delta\gamma^{-1}}=[c,a,b,b,a]^{-\delta\gamma^{-1}}=[c,b,a,b,a]^{(-\delta\gamma^{-1})^2},
$$
so that $(-\delta\gamma^{-1})^2=1$, a contradiction.
Hence $\gamma=0$, and similarly we have $\delta=0$ as well.
Finally, if $\epsilon\neq 0$, then we have by Lemma \ref{lem: GN mod gamma5}(iii)
$$
[c,a,a,b,b]=[b,c,a,a,b]^{-3}[b,a,a,c,b]^{-3}=[c,b,a,a,b]^3=[c,a,a,b,b]^3.
$$
Therefore $[c,a,a,b,b]=1$, and similarly if $\zeta\neq 0$, so the claim follows.

Now, for every $a,b,c,d\in G$ we have
$$
1=[a,bc,bc,d,d]=[a,b,c,d,d][a,c,b,d,d],
$$
so that $[a,b,c,d,d]=[c,a,b,d,d]$.
The Hall-Witt identity thus gives
$$
1=[a,b,c,d,d][c,a,b,d,d][b,c,a,d,d]=[a,b,c,d,d]^3,
$$
and then $[a,b,c,d,d]=1$.

If $p=5$, then we apply Lemma \ref{lem: GN mod gamma5}(iv) twice, so that we obtain $[a,d,d,b,c]=1$ for every $a,b,c,d\in G$.
Applying again the same argument, we conclude that $[a,b,c,d,e]=1$ for every $a,b,c,d,e\in G$, as desired.

For the case $p=2$, observe that for every $a,b,c,d,e\in G$ we have
$$
1=[a,b,c,de,de]=[a,b,c,d,e][a,b,c,e,d],
$$
so that $[a,b,c,d,e]=[a,b,c,e,d]$.
The Hall-Witt now identity gives
$$
1=[a,b,c,d,e][d,e,[a,b,c]][e,[a,b,c],d]=[d,e,[a,b,c]],
$$
and hence $[G',\gamma_3(G)]=1$.
Thus, by the three subgroup lemma, it follows that 
$$
[G',G',G]\le [G',G,G'][G,G',G']=[\gamma_3(G),G']=1,
$$
and therefore, $[a,b,c,d,e]=[a,b,d,c,e]$ for every $a,b,c,d,e\in G$.
In particular, this shows that
$$
[a,b,c,c,d]=[a,b,c,d,c]=[a,b,d,c,c]=1
$$
for every $a,b,c,d\in G$.

We now show that $[a,b,b,c,d]=1$ for every $a,b,c,d\in G$.
Fix $a,b,c,d\in G$.
By Lemma \ref{lem: GN mod gamma5}(iii), since $[c,a,b,b,d]=1$, we obtain $[a,b,b,c,d]=[c,b,b,a,d]$. Thus, we deduce that
\begin{equation}
\label{eq: equalities} 
[a,b,b,c,d]=[a,b,c,b,d]=[a,b,d,b,c]=[c,b,b,a,d]=[c,b,d,b,a]=[d,b,b,a,c].
\end{equation}
Let
$$
H=\langle[a,b,b],[a,b,c],[a,b,d],[c,b,b],[c,b,d],[d,b,b]\rangle.
$$
If $H\leq Z_2(G)$ then $[a,b,b,c,d]=1$, so assume $H\not\leq Z_2(G)$. Note that $\gamma_3(G)^2\leq Z_2(G)$, so since $G$ is powerful of rank $5$, there must exist $\alpha,\beta,\gamma,\delta,\epsilon,\zeta\in\{0,1\}$, not all $0$, such that
$$
[a,b,b]^\alpha[a,b,c]^\beta[a,b,d]^\gamma[c,b,b]^\delta[c,b,d]^\epsilon[d,b,b]^\zeta \equiv 1 \pmod{Z_2(G)}.
$$
Now, if $\alpha\neq 0$, then we have
$$
[a,b,b]\equiv[a,b,c]^\beta[a,b,d]^\gamma[c,b,b]^\delta[c,b,d]^\epsilon[d,b,b]^\zeta \pmod{Z_2(G)},
$$
and substituting it in $[a,b,b,c,d]$, we obtain $[a,b,b,c,d]=1$.
If any of the $\beta,\gamma,\delta,\epsilon,\zeta$ is non-zero, then, in view of (\ref{eq: equalities}), the same argument applies.
Therefore, $[a,b,b,c,d]=1$ for any $a,b,c,d\in G$, as we wanted.

Now, $1=[a,bc,bc,d,e]=[a,b,c,d,e][a,c,b,d,e]$, or in other words, $[a,b,c,d,e]=[a,c,b,d,e]$ for any $a,b,c,d,e\in G$.
The Hall-Witt identity hence gives
$$
1=[a,b,c,d,e][b,c,a,d,e][c,a,b,d,e]=[a,b,c,d,e]^3=[a,b,c,d,e],
$$
and the result follows.
\end{proof}

Finally, in Theorem \ref{thm: gamma6=1}  and Theorem \ref{thm: r>5} below, we show that, in general, $\gamma_6(G)=1$ and that $\gamma_5(G)$ may be non-trivial if the rank of $G$ exceeds $5$.
This will prove Theorem \ref{thm: non-metabelian}(vi) and we will thus conclude the proof of our main result.

\begin{thm}\label{thm: gamma6=1} 
Let $G$ be a powerful $3$-Engel $p$-group.
Then, $\gamma_6(G)=1$.
\end{thm}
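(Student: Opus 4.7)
The plan is to combine the Gupta--Newman bound $\gamma_5(G)^{20}=1$ from Theorem \ref{thm: gamma5^20=1} with the standard powerful-group inequality $\gamma_{i+1}(G)\le \gamma_i(G)^p$ (respectively $\gamma_i(G)^4$ when $p=2$), exactly as sketched in the introduction. The argument is a short prime-by-prime case split.

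First, suppose $p\notin\{2,5\}$. Then $\gcd(20,p)=1$, so Theorem \ref{thm: gamma5^20=1} yields $\gamma_5(G)=1$, and in particular $\gamma_6(G)=1$. Next, suppose $p=5$. Then Theorem \ref{thm: gamma5^20=1} gives $\gamma_5(G)^5=1$. Since $G$ is a powerful $p$-group with $p$ odd, $\gamma_i(G)$ is powerfully embedded in $G$, so $\gamma_6(G)=[\gamma_5(G),G]\le \gamma_5(G)^p=\gamma_5(G)^5=1$.

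Finally, suppose $p=2$. Then Theorem \ref{thm: gamma5^20=1} gives $\gamma_5(G)^4=1$. Now $G$ is a powerful $2$-group, so Shalev's Interchange Lemma applied to the powerfully embedded subgroup $\gamma_5(G)$ yields $\gamma_6(G)=[\gamma_5(G),G]\le \gamma_5(G)^4=1$, exactly as was used in the discussion before Lemma \ref{lem:metabsquarecomm}.

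There is no real obstacle here: the statement is essentially a compact restatement of the argument already outlined in the introduction, and the only delicate point is invoking the correct form of the Shalev/powerful commutator inequality for $p=2$ (with exponent $4$ rather than $p$). Once that inequality is applied, the Gupta--Newman bound closes each case immediately.
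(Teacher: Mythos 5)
Your proof is correct and follows essentially the same route as the paper: apply the Gupta--Newman bound $\gamma_5(G)^{20}=1$ to kill $\gamma_5(G)$ outright for $p\neq 2,5$, and for $p\in\{2,5\}$ combine the resulting exponent bound on $\gamma_5(G)$ with the powerfully-embedded inequality $\gamma_6(G)=[\gamma_5(G),G]\le\gamma_5(G)^p$ (resp.\ $\gamma_5(G)^4$ for $p=2$); the paper merely writes this uniformly as $\gamma_5(G)^{2p}$.
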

\begin{proof}
Since $\gamma_5(G)^{20}=1$ by Theorem \ref{thm: gamma5^20=1}, it follows that $\gamma_5(G)=1$ if $p\not\in\{2,5\}$ and, since $G$ is powerful, that
$$
\gamma_6(G)=[\gamma_5(G),G]\le\gamma_5(G)^{2p}=1
$$
if $p=2$ or $5$.
\end{proof}

\begin{lem}
\label{lem: nilpotent so powerful}
Let $G$ be a finite $p$-group of nilpotency class $c\le p^i$, where $i\ge 1$.
If $p$ is odd, then $G^{p^i}$ is powerful, and if $p=2$, then $G^{2^{i+1}}$ is powerful.
\end{lem}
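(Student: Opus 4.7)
The plan is to apply the Hall--Petresco collection formula and exploit the divisibility of the binomial coefficients $\binom{p^i}{k}$ (respectively $\binom{2^{i+1}}{k}$) by $p$ (respectively by $4$), combined with an induction on the nilpotency class $c$ of $G$.

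First I would recall the Hall--Petresco identity, together with its standard commutator variant: for any $x,y$ in a group and $n\ge 1$,
\[
[x^n,y] \;=\; [x,y]^n\prod_{k\ge 2}d_k(x,y)^{\binom{n}{k}}, \qquad d_k(x,y)\in \gamma_{k+1}(\langle x,y\rangle),
\]
and an analogous formula for $[x,y^n]$. Iterating yields an expansion of $[x^n,y^n]$ as a product of iterated commutators in $x,y$ whose exponents are integer polynomials in $n$ built from products $\binom{n}{j}\binom{n}{\ell}$ with $j,\ell\ge 1$.

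For $p$ odd, I would set $n=p^i$. Kummer's theorem gives $v_p\binom{p^i}{k}=i-v_p(k)\ge 1$ for every $1\le k\le p^i-1$, and the class hypothesis $c\le p^i$ forces all commutators $d_k$ of weight exceeding $c$ to vanish. Thus every surviving factor in the expansion of $[x^{p^i},y^{p^i}]$ has exponent divisible by $p$. For $p=2$ with $n=2^{i+1}$, Kummer analogously gives $v_2\binom{2^{i+1}}{k}=i+1-v_2(k)\ge 2$ on the relevant range $1\le k\le 2^{i+1}-1$ except when $v_2(k)\ge i$, where the unique index is $k=2^i$; but this contributes a commutator of weight $\ge 2^i+1$, which vanishes by $c\le 2^i$. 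Hence every surviving exponent is divisible by $4$.

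To pass from ``product of high-power terms'' to membership in $(G^{p^i})^p$ (respectively $(G^{2^{i+1}})^4$), I would proceed by induction on the class $c$. The base case $c=1$ is trivial since $G$ is abelian. For the inductive step, I would reduce modulo $\gamma_c(G)$ (which is central), apply the inductive hypothesis to $G/\gamma_c(G)$ (of class $c-1$), and handle the remaining top-weight terms lying in $\gamma_c(G)$ directly, using the identity $[x,y^{p^i}]\equiv[x,y]^{p^i}\pmod{\text{higher commutators}}$ which in a central context makes these terms genuine $p^i$-th powers, hence their $p$-th (or $4$-th) power lies in $(G^{p^i})^p$ (respectively $(G^{2^{i+1}})^4$).

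The main technical obstacle I expect is the final identification: after establishing the required $p$-adic divisibility of exponents, one still has to recognise each factor $c_\alpha^{e_\alpha}$ as a $p$-th power of an element already inside $G^{p^i}$ (rather than merely as an element of $G^{p^{i+1}}$). Threading this through via the inductive reduction on $c$ and keeping track of which commutators arise in $\gamma_c(G)$ is where the bookkeeping is delicate, especially in the tight boundary case $p=2$, $c=2^i$, $k=2^i$ where Kummer's bound $v_2=2$ leaves no slack.
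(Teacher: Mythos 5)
Your choice of tools (Hall--Petresco collection, the $p$-adic valuation of $\binom{p^i}{k}$, resp.\ $\binom{2^{i+1}}{k}$, and the vanishing of commutators of weight exceeding $c$) is exactly the paper's, and your treatment of the exceptional index $k=2^i$ when $p=2$ is correct. However, the step you yourself flag as the ``main technical obstacle'' is a genuine gap, and the inductive patch you sketch does not close it. If you fully expand $[x^{p^i},y^{p^i}]$ into commutators $d$ in the letters $x$ and $y$, then knowing that each exponent $e$ is divisible by $p$ only gives $d^e\in G^p$; to conclude $d^e=(d^{e/p})^p\in (G^{p^i})^p$ you would need $d^{e/p}\in G^{p^i}$, which in practice requires $v_p(e)\ge i+1$. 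That divisibility is not available: a surviving commutator of weight $w\le p^i$ can occur with exponent $\binom{p^i}{j}\binom{p^i}{\ell}$ where $j=\ell=p^{i-1}$ (so $j+\ell\le w$ is possible), giving valuation exactly $2$, which is smaller than $i+1$ as soon as $i\ge 2$. Passing to $G/\gamma_c(G)$ does not help, since the leftover central factors are precisely such $d^e$ with $v_p(e)=2$, and for a central commutator $d$ of weight $c$ one has $d^{p^i}\in G^{p^i}$ but no reason to have $d^{p}\in G^{p^i}$.

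The paper avoids this with one observation your expansion destroys: do not expand the block $x^{p^i}$. Applying the collection formula to $[z,y^{p^i}]$ with $z=x^{p^i}$ kept as a single letter gives
$[x^{p^i},y^{p^i}]=[x^{p^i},y]^{p^i}c_2^{\binom{p^i}{2}}\cdots c_{p^i}$ with $c_j\in\gamma_j(\langle [x^{p^i},y],y\rangle)$. For $j\ge 2$ each $c_j$ lies in the normal closure of $[x^{p^i},y]$, hence in the normal subgroup $G^{p^i}$, as well as in $\gamma_{j+1}(G)$. The class hypothesis kills $c_{p^i}$ (resp.\ the terms with $j\ge 2^i$), and every remaining factor is visibly a $p$-th (resp.\ fourth) power of an element already in $G^{p^i}$, because its base lies in $G^{p^i}$ and its exponent is divisible by $p$ (resp.\ by $4$). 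With that repackaging the whole proof is three lines and needs no induction on the class.
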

\begin{proof}
Let $p$ be odd.
By the Hall-Petresco identity we have
$$
[x^{p^i},y^{p^i}]=[x^{p^i},y]^{p^i}c_2^{\binom{p^i}{2}}\cdots c_{p^i-1}^{\binom{p^i}{p^i-1}}c_{p^i},
$$
where $c_j\in\gamma_j(\langle [x^{p^i},y],y\rangle)\le G^{p^i}\cap\gamma_{j+1}(G)$ for $j=2,\ldots,p$.
In particular, $c_{p^i}\in\gamma_{p^i+1}(G)=1$, so that $[G^{p^i},G^{p^i}]\le (G^{p^i})^p$.
For $p=2$ the proof follows similarly since $4|\binom{2^{i+1}}{j}$ for every $2\le j\le 2^i-1$.
\end{proof}

%
\begin{thm}
\label{thm: r>5}
For $p\in \{2,5\}$, there exists a powerful $3$-Engel $p$-group
 where $\gamma_5(G)\neq 1$.
\end{thm}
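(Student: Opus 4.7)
The plan is to exhibit, for each prime $p\in\{2,5\}$, an explicit finite powerful $3$-Engel $p$-group of nilpotency class exactly $5$. Such an example must have rank at least $6$ by Theorem~\ref{thm: r=4,5}, so the construction will be substantially larger than the examples for class~$4$ given earlier (Examples~\ref{ex: p=2,r=3,powerful}, \ref{ex: p=3} and \ref{ex: p>3}); to be consistent with Theorem~\ref{thm: non-metabelian}(vi), I would aim for at least $14$ generators.

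First, for each of $p=2$ and $p=5$, I would fix a generating set $x_1,\ldots,x_r$ with $r\ge 14$, assign power orders $x_i^{p^{n_i}}=1$ for carefully chosen exponents $n_i\in\mathbb{N}$, and write down a complete set of two-generator commutator relations $[x_i,x_j]$ as explicit words in $p$-th powers (for $p=5$) or $4$-th powers (for $p=2$) of generators. The relations must be chosen so as to simultaneously ensure:
\begin{enumerate}
\item \emph{Powerfulness}: $[G,G]\le G^p$ when $p=5$ and $[G,G]\le G^4$ when $p=2$, which is forced directly from the form of the commutator words.
\item \emph{3-Engel identity}: $[y,x,x,x]=1$ for all $x,y\in G$. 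Using Lemma~\ref{lem: GN mod gamma5} and Lemma~\ref{lem: GN mod gamma6}, this reduces to checking it for $x,y$ running over generators and products of two generators, producing a finite system of congruences on the exponents $n_i$ and the coefficients appearing in the commutator words.
\item \emph{Non-vanishing of $\gamma_5$}: a designated $5$-fold commutator, say $[x_{i_1},x_{i_2},x_{i_3},x_{i_4},x_{i_5}]$, remains nontrivial in the resulting group.
\end{enumerate}

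Once the presentation is assembled, the proof splits into two verification steps. The first is purely formal: with the Gupta--Newman identities of Section~\ref{sec: preliminaries} in hand, the 3-Engel identity on the full group follows from its validity on a short list of words in the generators, which can be checked exponent by exponent modulo $\gamma_6(G)$. The second, and more delicate, step is to prove that the abstract group defined by the presentation does not collapse below class~$5$. The natural way to do this is by producing an explicit consistent polycyclic presentation (power--conjugate relations on the chosen basis) and invoking the consistency check of the collection algorithm, thereby certifying both the order of the group and the non-triviality of the chosen element of $\gamma_5(G)$.

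The main obstacle is exactly this consistency verification: naively raising the ranks or the power-orders can force relations that unintentionally place the distinguished $\gamma_5$-generator into the trivial coset. The remedy is to realize the groups as appropriate quotients of a free nilpotent $3$-Engel group of class $5$ (which, by Theorem~\ref{thm: gamma5^20=1}, has $\gamma_5$ of exponent dividing $20$, hence nontrivial $p$-part for $p\in\{2,5\}$), and then identify a quotient by a characteristic subgroup that lands inside $G^p$ (respectively $G^4$), preserving the chosen nonzero class of $\gamma_5$. In practice this amounts to a finite combinatorial verification best carried out with a polycyclic-quotient package, but the resulting presentations can be recorded in closed form analogous to Examples~\ref{ex: p=3} and \ref{ex: p>3}.
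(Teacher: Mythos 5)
Your proposal is a plan rather than a proof: for an existence statement the entire mathematical content is the actual construction together with the verification that $\gamma_5\neq 1$, and neither is carried out. You correctly identify the two dangers (achieving powerfulness and the Engel identity without collapsing $\gamma_5$), but your proposed remedy does not resolve them. Quotienting a free nilpotent $3$-Engel group of class $5$ by a characteristic subgroup ``that lands inside $G^p$'' does not produce a powerful group: powerfulness means $[G,G]\le G^p$ (resp.\ $G^4$), and the free $3$-Engel group of class $5$ is nowhere near satisfying this; to force it in a quotient you must kill a large piece of $G'$ modulo $G^p$, and whether the distinguished weight-$5$ commutator survives that is precisely the open question your argument leaves untouched. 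The paper goes in the opposite direction: it works inside the $\Q$-powered (Mal'cev) hull of a suitable relatively free group on \emph{three} generators $a,b,c$, adjoins $p$-th roots of the generators, and takes $K^{*}=(M^{*})^{p}$ with $M^{*}=\langle a^{1/p},b^{1/p},c^{1/p}\rangle$. This subgroup is powerful by Lemma \ref{lem: nilpotent so powerful} and \emph{contains} $\langle a,b,c\rangle$, so nontriviality of $\gamma_5$ is inherited rather than needing to be protected from a collapse.

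The second, and decisive, gap is that the enlarged powerful group is no longer automatically $3$-Engel: it contains new elements such as $d_1=[a^{1/p},b^{1/p}]^{p}$, and imposing the Engel identity introduces fresh relations in mixed multi-weights like $\{b,c,c,d_1\}$, $\{b,b,c,d_2\}$, $\{a,b,c,d_3\}$. The heart of the paper's proof is the explicit computation, via the Gupta--Newman relations (\ref{eq: relation 1})--(\ref{eq: relation 2}) and (\ref{eq: relation}), showing that these extra relations are compatible with $[a,b,c,b,c]$ having order $p$ modulo the rest, first in a Lie ring model and then in the group. Your assertion that the Engel identity ``follows from its validity on a short list of words'' conflates verifying an identity with imposing one: the issue is not whether the check is finite but whether the imposed relations annihilate $\gamma_5$, and no argument is given for that. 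Finally, a minor point: the example need only have rank at least $6$ (by Theorem \ref{thm: r=4,5}); aiming for rank $14$ is not required, and the paper's example in fact has rank $24$.
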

\begin{proof} 
To aid the reader we will first construct an analogous Lie ring example and then guided by the Mal'cev correspondence, between Lie algebras over ${\Q}$ and ${\Q}$-powered groups, we will produce the group we want in an analogous way. \newline\newline
Let $F=\langle a,b,c\rangle_{\Q}$ be the free $\Q$-Lie algebra on $3$-generators.
Let $I$ be the ideal of $F$ $\Q$-generated by:
\begin{itemize}
    \item All commutators in $\{a,b,c\}$ of length $6$.
    \item All commutators in $\{a,b,c\}$  with $2$ entries in $a$.
    \item All commutators in $\{a,b,c\}$ with $3$ entries in $b$.
    \item All commutators in $\{a,b,c\}$ with $3$ entries in $c$.
    \item All commutators in $\{b,c\}$ with $2$ entries in $b$ and $2$ entries in $c$.
\end{itemize}
We first consider the case when $p=5$.  
With some abuse of notation, write $L=F/I=\langle a, b, c\rangle_{\Q}$, and note that $\gamma_6(L)=1$. Notice that the ${\Q}$-algebra $L$ has basis $a$, $b$, $c$, $[a,b]$, $[a,c]$, $[b,c]$, $[a,b,b]$, $[a,b,c]$, $[a,c,b]$, $[a,c,c]$, $[b,c,c]$, $[c,b,b]$, $[a,b,b,c]$, $[a,b,c,b]$, $[a,c,b,b]$, $[a,c,c,b]$, $[a,c,b,c]$, $[a,b,c,c]$, $[a,b,b,c,c]$, $[a,b,c,b,c]$, $[a,b,c,c,b]$, $[a,c,c,b,b]$, $[a,c,b,c,b]$, $[a,c,b,b,c]$. Therefore 
$\dim_{\Q}(L)=24$ and $\dim_{\Q}(\gamma_5(L))=6$.
Now consider the Lie subring of $L$ generated by $\frac{1}{p}a, \frac{1}{p}b$ and $\frac{1}{p}c$, $M=\langle \frac{1}{p}a,\frac{1}{p}b,\frac{1}{p}c\rangle_{\Z}$, and define $K=pM\subseteq L$.
Then
$$
[K,K]=[pM,pM]=p^2[M,M]\le p^2M=pK,
$$
so that $R=K/p^{5}K$ is a powerful $p$-Lie ring. 
Moreover, $\langle a,b,c\rangle_{\Z}\le K$, so the nilpotency class of $K$ is $5$.
Also in this case, we have $\dim_{\Z}(K)=24$ and $\dim_{\Z}(\gamma_5(K))=6$. The ${\Z}$-basis consists of the following elements:
$a$, $b$, $c$, $d_{1}=\frac{1}{p}[a,b]$, $d_{2}=\frac{1}{p}[a,c]$, $d_{3}=\frac{1}{p}[b,c]$, $e_{1}=\frac{1}{p^{2}}[a,b,b]$, $e_{2}=\frac{1}{p^{2}}[a,b,c]$, $e_{3}=\frac{1}{p^{2}}[a,c,b]$, $e_{4}=\frac{1}{p^{2}}[a,c,c]$, $e_{5}=\frac{1}{p^{2}}[b,c,c]$, $e_{6}=\frac{1}{p^{2}}[c,b,b]$, $f_{1}=\frac{1}{p^{3}}[a,b,b,c]$, $f_{2}=\frac{1}{p^{3}}[a,b,c,b]$, $f_{3}=\frac{1}{p^{3}}[a,c,b,b]$, $f_{4}=\frac{1}{p^{3}}[a,c,c,b]$, $f_{5}=\frac{1}{p^{3}}[a,c,b,c]$, $f_{6}=\frac{1}{p^{3}}[a,b,c,c]$, $g_{1}=\frac{1}{p^{4}}[a,b,b,c,c]$, $g_{2}=\frac{1}{p^{4}}[a,b,c,b,c]$, $g_{3}=\frac{1}{p^{4}}[a,b,c,c,b]$, $g_{4}=\frac{1}{p^{4}}[a,c,c,b,b]$, $g_{5}=\frac{1}{p^{4}}[a,c,b,c,b]$, $g_{6}=\frac{1}{p^{4}}[a,c,b,b,c]$. 

Notice that $\gamma_{5}(K)\cap p^{5}K=p\gamma_{5}(K)$ and thus $R=K/p^{5}K$ is nilpotent of class $5$ and also has dimension $24$ as an abelian $p$-group.
Now consider the verbal ideal $J$ of $K$ generated by the word $[x,y,y,y]$ and $p^5x$.
As $p>3$,
%
%
%
%
%
 $J$ is also generated by $p^{5}x$ and the multilinear word
                  $$\sum_{\sigma\in S_{3}}[x,y_{\sigma(1)},y_{\sigma(2)},y_{\sigma(3)}].$$
%
It follows that $J$ is a multigraded ideal in $a,b,c,d_{1},d_{2},d_{3},e_{1},\ldots ,e_{6},f_{1},\ldots ,f_{6},g_{1},\ldots ,g_{6}$.

Note that $e_{1},\ldots ,e_{6},f_{1},\ldots ,f_{6}, g_{1},\ldots ,g_{6}$ are in the third center and that the only 
multi-homogeneous components that are non-trivial (and not involving only $a,b,c$) are those in $\{b,c,c,d_{1}\}$, $\{b,b,c,d_{2}\}$, $\{a,b,c,d_{3}\}$. 
We consider first the two of these. 

%
 It is easy to see and well known \cite{guptanewman}, that the relators for these multi-homogeneous components are generated by 
\begin{eqnarray}
   \frac{1}{p}[a,b,b,c,c]= [d_{1},b,c,c] & = & 2[d_{1},c,b,c]=\frac{2}{p}[a,b,c,b,c]; \label{eq: relation 1} \\
\frac{1}{p}[a,b,c,c,b]=[d_{1},c,c,b] & = & -3[d_{1},c,b,c]=-\frac{3}{p}[a,b,c,b,c];  \\
   \mbox{}\frac{1}{p}[a,c,c,b,b]=[d_{2},c,b,b] & = & 2[d_{2},b,c,b]=\frac{2}{p}[a,c,b,c,b];\\
            \mbox{}\frac{1}{p}[a,c,b,b,c]=[d_{2},b,b,c] & = & -3[d_{2},b,c,b]=-\frac{3}{p}[a,c,b,c,b].\label{eq: relation 2}
%
%
%
%
%
\end{eqnarray}
We also know from \cite{guptanewman} that the weight $5$ relators that are consequences of the $3$-Engel identity involving only $a,b,c$ are
%
\begin{eqnarray}
                [a,b,c,b,c] & = & 3[a,b,b,c,c]; \nonumber\\
\mbox{}   [a,b,c,c,b] & = & -4 [a,b,b,c,c]; \nonumber\\
\mbox{} [a,c,b,c,b] & = & 3[a,b,b,c,c]; \nonumber\\
\mbox{} [a,c,b,b,c] & = & -4[a,b,b,c,c]; \nonumber\\
\mbox{} [a,c,c,b,b] & = & [a,b,b,c,c]; \nonumber\\
\mbox{} \label{eq: relation} p[a,b,b,c,c] & = & 0.
\end{eqnarray}

Finally we consider $(a,b,c,d_{3})$. The 3-Engel relations reduce to  
\begin{eqnarray*}
    0 & = & [a,d_{3},b,c]+[a,d_{3},c,b]+[a,b,d_{3},c]+[a,c,d_{3},b]+[a,b,c,d_{3}]+[a,c,b,d_{3}]; \\
   0 & = & [d_{3},a,b,c]+[d_{3},a,c,b]+[d_{3},b,a,c]+[d_{3},b,c,a]+[d_{3},c,a,b]+[d_{3},c,b,a].
\end{eqnarray*}
From the first equation we get (using (\ref{eq: relation 1})-(\ref{eq: relation 2}))
\begin{eqnarray*}
      0 & = & \frac{1}{p}[a,[b,c],b,c]] + \frac{1}{p}[a,[b,c],c,b]+\frac{1}{p}[a,b,[b,c],c]+\frac{1}{p}[a,c,[b,c],b]+\frac{1}{p}[a,b,c,[b,c]]+\frac{1}{p}[a,c,b,[b,c]] \\
 \mbox{}      & = & \frac{1}{p}[a,b,c,b,c]-\frac{1}{p}[a,c,b,b,c]+\frac{1}{p}[a,b,c,c,b]-\frac{1}{p}[a,c,b,c,b]+\frac{1}{p}[a,b,b,c,c]-\frac{1}{p}[a,b,c,b,c] \\
 \mbox{}& + & \frac{1}{p}[a,c,b,c,b]-\frac{1}{p}[a,c,c,b,b]+\frac{1}{p}[a,b,c,b,c]-\frac{1}{p}[a,b,c,c,b]+\frac{1}{p}[a,c,b,b,c]-\frac{1}{p}[a,c,b,c,b] \\
 \mbox{}           & = & \frac{3}{p}[a,b,c,b,c]-\frac{3}{p}[a,c,b,c,b].
\end{eqnarray*}
This gives us
\begin{equation}
\frac{1}{p}[a,b,c,b,c]=\frac{1}{p}[a,c,b,c,b].
\end{equation}
%
Notice also that (\ref{eq: relation}) is compatible with and a consequence of $p^{5}x=0$, as $p[a,b,b,c,c]=p^{5}g_{1}$. It follows from this analysis that in  $K/J$ we have
that $\overline{g_{2}}$ is of order $p^{5}$ and in particular $0\not = p^{4}\bar{g_{2}}=\overline{[a,b,c,b,c]}$.
Hence $K/J$ is a $3$-Engel powerful Lie ring of nilpotency class $5$. \\ \\
We now want to construct an analogous group theory example. Unfortunately we cannot use the Mal'cev correspondence directly but guided by our Lie ring example above we can imitate the process. We start with a ${\Q}$-powered group 
$F^{*}=\langle a,b,c\rangle^{\Q}$ corresponding to our Lie algebra $F$ and we let $I^{*}$ be the normal ${\Q}$-subgroup generated by:
\begin{itemize}
    \item All commutators in $\{a,b,c\}$ of length $6$.
    \item All commutators in $\{a,b,c\}$ of length $5$ with $2$ entries in $a$.
    \item All commutators in $\{a,b,c\}$ with $3$ entries in $b$.
    \item All commutators in $\{a,b,c\}$ with $3$ entries in $c$.
    \item All commutators in $\{b,c\}$ with $2$ entries in $b$ and $2$ entries in $c$.
\end{itemize}
Let $L^{*}=F^{*}/I^{*}$ and consider the subgroup $M^{*}=\langle a^{1/p}, b^{1/p}, c^{1/p}\rangle$. Then let $K^{*}=(M^{*})^{p}$. By Lemma \ref{lem: nilpotent so powerful} we know that $[K^{*},K^{*}]\leq (K^{*})^{p}$. Notice that $K^{*}$ is a torsion-free group that
is nilpotent of class $5$ and thus a residually finite $p$-group. Pick $l$ large enough so that $[a,b,b,c,c]\not\in (K^{*})^{p^{l}}$. In particular $R^{*}=K^{*}/(K^{*})^{p^{l}}$ is nilpotent of class $5$. We now consider the verbal subgroup $J^{*}$
generated by $x^{p^{l}}$ and $[y,x,x,x]$. As $p>3$, $J$ is also generated by $x^{p^{5}}$ and the ``multi linear word''
                    $$\prod_{\sigma\in S_{3}}[x,y_{\sigma(1)},y_{\sigma(2)},y_{\sigma(3)}].$$
Observe that $R^{*}$ is generated by $a,b,c,d_{1}=[a^{1/p},b^{1/p}]^{p}, d_{2}=[a^{1/p},c^{1/p}]^{p}$ and $d_{3}=[b^{1/p},c^{1/p}]^{p}$ and $\gamma_{3}(K^{*})$. As $\gamma_{3}(K^{*})\leq Z_{3}(K^{*})$, one sees for similar reasons as in the Lie ring case, that one only needs to consider the Engel relation of multi-weight in  $\{b,c,c,d_{1}\} $, $\{ b,b,c,d_{2}\}$, $\{a,b,c,d_{3}\}$, as well as those that hold in $\langle a,b,c\rangle $. Similar calculations as for the Lie ring case show that
we get the following defining relations
for $J^{*}$ (together with $x^{p^{l}}=1$ using \cite{guptanewman})
\begin{eqnarray*}
   [a,b,b,c,c]^{1/p} & = & [a,b,c,b,c]^{2/p}; \\
\mbox{} [a,b,c,c,b]^{1/p} & = & [a,b,c,b,c]^{-3/p};  \\
   \mbox{}[a,c,c,b,b]^{1/p} & = & [a,c,b,c,b]^{2/p};  \\
               \mbox{}[a,c,b,b,c]^{1/p} & = & -[a,c,b,c,b]^{-3/p}; \\
%
%
%
%
%
 \mbox{}                [a,b,c,b,c]^{1/p} & = & [a,c,b,c,b]^{1/p}; \\
\mbox{}             [a,c,b,c,b]^{5} & = & 1.
\end{eqnarray*}
%
Thus the group $K^{*}/J^{*}$ is a powerful $3$-Engel $5$-group that is nilpotent of class $5$. \\ \\
We now turn to the case $p=2$. We let $q=16$. As the approach is very similar, we only outline the proof. For the Lie ring case, we start with the same $F$ and $I$. Now consider the Lie subring of $L$ generated by $\frac{1}{16}a, \frac{1}{16}b$ and $\frac{1}{16}c$, $M=\langle \frac{1}{16}a,\frac{1}{16}b,\frac{1}{16}c\rangle_{\Z}$, and define $K=16M\subseteq L$. This has basis $a$, $b$, $c$, $d_{1}=\frac{1}{q}[a,b]$, $d_{2}=\frac{1}{q}[a,c]$, $d_{3}=\frac{1}{q}[b,c]$, $e_{1}=\frac{1}{q^{2}}[a,b,b]$, $e_{2}=\frac{1}{q^{2}}[a,b,c]$, $e_{3}=\frac{1}{q^{2}}[a,c,b]$, $e_{4}=\frac{1}{q^{2}}[a,c,c]$, $e_{5}=\frac{1}{q^{2}}[b,c,c]$, $e_{6}=\frac{1}{q^{2}}[c,b,b]$, $f_{1}=\frac{1}{q^{3}}[a,b,b,c]$, $f_{2}=\frac{1}{q^{3}}[a,b,c,b]$, $f_{3}=\frac{1}{q^{3}}[a,c,b,b]$, $f_{4}=\frac{1}{q^{3}}[a,c,c,b]$, $f_{5}=\frac{1}{q^{3}}[a,c,b,c]$, $f_{6}=\frac{1}{q^{3}}[a,c,b,b]$, $g_{1}=\frac{1}{q^{4}}[a,b,b,c,c]$, $g_{2}=\frac{1}{q^{4}}[a,b,c,b,c]$, $g_{3}=\frac{1}{q^{4}}[a,b,c,c,b]$, $g_{4}=\frac{1}{q^{4}}[a,c,c,b,b]$, $g_{5}=\frac{1}{q^{4}}[a,c,b,c,b]$, $g_{6}=\frac{1}{q^{4}}[a,c,b,b,c]$. 

Notice that $\gamma_{5}(K)\cap 4q^{4}K=4\gamma_{5}(K)$ and thus $R=K/4q^{4}K$ is nilpotent of class $5$ and also has dimension $24$ as an abelian $2$-group.
Then
$$
[K,K]=[qM,qM]=q^{2}[M,M]\leq 4\cdot q M=4K,
$$
so that $R=K/4q^{4}K$ is a powerful $2$-Lie ring that also has rank $24$ as an abelian $2$-group. Now consider the verbal ideal $J$ of $K$ generated by the words $q^{5}x$ and $[y,x,x,x]$.
Things are slightly more complicated here in that the ideal $J$ is not multi-graded.
It is though graded with respect to total weight of the generators. More precisely the identity $[y,x,x,x]=0$ is equivalent to saying that the following hold for all $x,y,z,x_{1},x_{2},x_{3}$ from the generating set above. 
\begin{eqnarray}
    [y,x,x,x] & = & 0; \nonumber \\
\mbox{}[y,z,x,x]+[y,x,z,x]+[y,x,x,z]+[y,x,z,z]+[y,z,x,z]+[y,z,z,x] & = & 0; \label{eq: relation 3}\\
\mbox{}\sum_{\sigma\in S_{3}}[y,x_{\sigma(1)},x_{\sigma(2)},x_{\sigma(3)}] & = & 0.\label{eq: relation 4}
\end{eqnarray}
%
%
 Note that again $e_{1},\ldots ,e_{6},f_{1},\ldots ,f_{6}, g_{1},\ldots ,g_{6}$ are in the third center and that the only homogeneous components that are not consequences of the $3$-Engel identity involving only $a,b,c$ are those using (\ref{eq: relation 3}) for 
$(b,c,d_{1})$, $(b,c,d_{2})$, and (\ref{eq: relation 4}) for $(a,b,c,d_{3})$. We know already from \cite{guptanewman} that without those identities the algebra would be nilpotent of class $5$. As in the $p=5$ case these additional identities are compatible with this and
we can conclude that the class remains $5$. The group situation works then similarly as before. 
\end{proof}

Finally, we summarize the results obtained in the last two sections.

\begin{proof}[Proof of Theorem \ref{thm: non-metabelian}]
Parts (i) and (iii) follow from Theorem \ref{thm: r=3, p neq2}, and these bounds are best possible since every group of nilpotency class $3$ is always $3$-Engel. Part (ii) follows from Theorem \ref{thm: gamma5^20=1}, and this upper bound is best possible by Examples \ref{ex: p=3} and \ref{ex: p>3}. Part (iv) follows from Theorem \ref{thm: r=3, p=2}, and this upper bound is best possible by Example \ref{ex: p=2,r=3,powerful}. Part (v) follows from Theorem \ref{thm: r=4,5}, and this upper bound is best possible by Examples \ref{ex: p=2,r=3,powerful} and \ref{ex: p>3}. Part (vi) follows from Theorem \ref{thm: gamma6=1} and Theorem \ref{thm: r>5}.
\end{proof}

\bibliography{references}

\end{document}